\title{A stability conjecture for the unstable cohomology\\ of $\SL_n\Z$, mapping class groups, and $\Aut(F_n)$}
\author{Thomas Church, Benson Farb and Andrew Putman\thanks{The authors gratefully acknowledge support from the National Science Foundation.}}
\theoremstyle{plain}
\newtheorem{conjecture}{Conjecture}
\newtheorem*{conjmcgre}{Conjecture \ref{conjecture:mcg}, restated}
\newtheorem*{conjslnre}{Conjecture \ref{conjecture:SLn}, restated}
\newtheorem{theorem}[conjecture]{Theorem}
\newtheorem{proposition}[conjecture]{Proposition}
\theoremstyle{definition}
\newtheorem{definition}[conjecture]{Definition}
\newtheorem*{remark}{Remark}
\newtheorem{remarknum}[conjecture]{Remark}
\newtheorem{question}[conjecture]{Question}
\newtheorem{problem}[conjecture]{Problem}
\newcommand{\nc}{\newcommand}
\nc{\dmo}{\DeclareMathOperator}
\nc{\Q}{\mathbb{Q}}
\nc{\R}{\mathbb{R}}
\nc{\Z}{\mathbb{Z}}
\nc{\C}{\mathcal{C}}
\nc{\Cpx}{\mathbb{C}}
\nc{\N}{\mathbb{N}}
\nc{\B}{\mathcal{B}}
\dmo{\Out}{Out}
\dmo{\Aut}{Aut}
\dmo{\Hom}{Hom}
\dmo{\Stab}{Stab}
\dmo{\kernel}{kernel}
\dmo\im{im}
\dmo\id{id}
\dmo\SL{SL}
\dmo\SO{SO}
\dmo\SU{SU}
\dmo\Sp{Sp}
\dmo\Mod{Mod}
\dmo\PMod{PMod}
\nc{\M}{\mathcal{M}}
\nc{\T}{\mathcal{T}}
\dmo{\thick}{thick}
\dmo{\vcd}{vcd}
\dmo{\cd}{cd}
\dmo{\St}{St}
\dmo{\spn}{span}
\nc\so{\mathfrak{so}}
\nc{\bwedge}{\textstyle{\bigwedge}}
\nc\vect[1]{#1}
\renewcommand{\phi}{\varphi}
\renewcommand{\epsilon}{\varepsilon}
\nc{\coloneq}{\mathrel{\mathop:}\mkern-1.2mu=}
\nc{\margin}[1]{\marginpar{\scriptsize #1}}
\nc{\para}[1]{\medskip\noindent\textbf{#1.}}
\newdimen{\QMod}
\newdimen{\QSq}
\newdimen{\QAut}
\begin{document}

\maketitle

\begin{abstract}
In this paper we conjecture the stability and vanishing of a large piece of the unstable rational cohomology of $\SL_n\Z$, of mapping class groups, and of $\Aut(F_n)$. 
\end{abstract}

\section{Introduction} For each of the sequences of groups in the title, the $i$-th rational cohomology is known to be independent of $n$ in a linear range $n\geq Ci$. Furthermore, this ``stable cohomology'' has been explicitly computed in each case. In contrast, very little is known about the unstable cohomology. In this paper we conjecture a new kind of stability in the cohomology of these groups.
These conjectures concern the unstable cohomology, in a range near the ``top dimension'', and in the first two cases imply vanishing of the unstable cohomology in this range. 
 

\section{Stability in the unstable cohomology of $\SL_n\Z$} 
\label{section:SLn}
The rational cohomology of the arithmetic group $\SL_n\Z$ coincides with that of the associated locally symmetric space $X_n\coloneq \SL_n\Z\backslash \SL_n\R/\SO(n)$:
\begin{equation}
\label{eq:locsym}
H^i(\SL_n\Z;\Q) \approx H^i(X_n;\Q).
\end{equation}
Borel \cite{Borel} proved that for each $i\geq 0$ the group $H^i(\SL_n\Z;\Q)$ does not depend on $n$ for $n\gg i$; it is believed that the optimal stable range should be $n>i+1$, though this has not been proved. Borel--Serre proved \cite{BS} that the \emph{virtual cohomological dimension} of $\SL_n\Z$ is \[\vcd(\SL_n\Z)=\binom{n}{2}.\]
This implies that $H^k(\SL_n\Z;\Q)=0$ for all $k>\binom{n}{2}$.

\begin{conjecture}[{\bf Stable instability}]
\label{conjecture:SLn}
For each $i\geq 0$ the group $H^{\binom{n}{2}-i}(\SL_n\Z;\Q)$ does not depend on $n$ for $n> i+1$.
\end{conjecture}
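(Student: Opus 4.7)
The natural strategy is to attack the conjecture via Borel--Serre duality, which produces an isomorphism
\[
H^{\binom{n}{2}-i}(\SL_n\Z;\Q) \;\cong\; H_i(\SL_n\Z;\St_n\otimes\Q),
\]
where $\St_n$ denotes the Steinberg module --- the top reduced rational homology of the Tits building $\T_n$ of $\SL_n\Q$. This reframes the conjecture as a twisted homological stability statement: $H_i(\SL_n\Z;\St_n\otimes\Q)$ should be independent of $n$ once $n>i+1$.

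First I would construct a stabilization map. Viewing $\SL_{n-1}\Z$ as the stabilizer of $e_n\in\Z^n$, the natural inclusion of Tits buildings and the join-with-$[e_n]$ operation induce a map $\St_{n-1}\to\St_n$ of $\SL_{n-1}\Z$-modules, and hence a candidate stabilization map
\[
H_i(\SL_{n-1}\Z;\St_{n-1}\otimes\Q)\longrightarrow H_i(\SL_n\Z;\St_n\otimes\Q).
\]
The second step is to feed this setup into Quillen's machinery: build a highly connected semi-simplicial set on which $\SL_n\Z$ acts with stabilizers of smaller $\SL_k\Z$, for instance the complex of partial bases or of unimodular sequences in $\Z^n$, whose connectivity is controlled by Maazen, Charney, and van der Kallen. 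The associated equivariant spectral sequence, with coefficients twisted by $\St_n\otimes\Q$ and analyzed via the Solomon--Tits description of the Steinberg module, should --- in the range $n>i+1$ --- collapse onto the stabilization map and force it to be an isomorphism.

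The main obstacle is the behavior of the coefficient system $\St_n$. Unlike the polynomial or finitely generated $\mathrm{VI}$-coefficient systems handled by van der Kallen and Randal-Williams--Wahl, $\St_n\otimes\Q$ is an infinite-dimensional, genuinely unstable representation of $\SL_n\Z$, and the restriction of $\St_n$ to a parabolic or Levi stabilizer is not simply a direct sum of copies of $\St_k$'s --- it involves an entire filtration of ``reduction'' terms. Controlling this filtration, probably via the integral apartment-class presentation of Lee--Szczarba and its refinements, is where the real work lies. A subsidiary difficulty is that the conjectured range $n>i+1$ is essentially optimal and matches the conjecturally optimal Borel stable range, so any argument that succeeds must exploit something specific to the Steinberg module beyond generic stability machinery --- for instance, combining known top-degree vanishing ($i=0$, due to Church--Farb--Putman) with the self-duality hinted at by the symmetric roles of stable and unstable ranges.
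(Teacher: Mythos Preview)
The statement you are attempting to prove is a \emph{conjecture}; the paper does not prove it, so there is no ``paper's own proof'' to compare against. What the paper does do mirrors your first two steps closely: it reformulates the conjecture via Borel--Serre duality as the statement that $H_i(\SL_n\Z;\St(\SL_n\Z))$ is independent of $n$ for $n>i+1$, and it constructs exactly the Steinberg stabilization map $\phi_*$ you describe (together with a ``parabolic stabilization'' map going the other way, which you do not mention). Your proposed third step --- feeding this into a Quillen-style spectral sequence over a highly connected complex of partial bases --- is a natural idea but is not pursued in the paper; indeed the paper explicitly remarks that the Steinberg coefficient systems do \emph{not} satisfy Dwyer's conditions for twisted stability, so the standard machinery does not apply out of the box, which is consistent with the obstacle you flag.

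There is, however, one central observation in the paper that your plan omits and that materially reshapes the problem. Using Ash's resolution of $\St(\SL_n\Z)$, the paper shows that the twice-iterated Steinberg stabilization
\[
\phi_*\circ\phi_*\colon H_i(\SL_n\Z;\St(\SL_n\Z))\longrightarrow H_i(\SL_{n+2}\Z;\St(\SL_{n+2}\Z))
\]
is the \emph{zero} map, via a sign argument involving an order-$4$ element of $\SL_{n+2}\Z$. Hence proving that $\phi_*$ is an isomorphism for $n>i+1$ is \emph{equivalent} to proving the Vanishing Conjecture $H^{\binom{n}{2}-i}(\SL_n\Z;\Q)=0$ in that range. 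Any spectral-sequence argument that succeeds in showing $\phi_*$ is an isomorphism is therefore secretly a vanishing argument, and conversely any strategy that only detects stability without forcing vanishing cannot be establishing that this particular $\phi_*$ is an isomorphism. This is worth internalizing before investing in the Quillen approach. (Small correction: the $i=0$ vanishing is due to Lee--Szczarba; Church--Farb--Putman handle $i=1$.)
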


The form of Conjecture~\ref{conjecture:SLn} may appear surprising to readers familiar with the known examples of homological stability, so we now explain some of the intuition behind the conjecture. The locally symmetric space $X_n$ is an orbifold of dimension $\binom{n+1}{2}-1$. Thus if $X_n$ were compact, Poincar\'e duality combined with \eqref{eq:locsym} and Borel's stability theorem would imply that $H^{\binom{n+1}{2}-1-i}(\SL_n\Z;\Q)$ was independent of $n$ for $n\gg i$. However $X_n$ is not compact and does not satisfy Poincar\'e duality. The more general notion of Bieri--Eckmann duality allows us to repair this gap, and also lets us give in \eqref{eq:SLnsteinstab} one concrete approach to proving Conjecture~\ref{conjecture:SLn}.

However, this approach to Conjecture~\ref{conjecture:SLn} has the peculiar consequence that if it  holds, then in fact the unstable cohomology \emph{vanishes} in the range of stability, as we will explain in detail below.
\begin{conjecture}[{\bf Vanishing Conjecture}]
\label{conjecture:SLnvanishing}
$H^{\binom{n}{2}-i}(\SL_n\Z;\Q)=0$ for all $i<n-1$.
\end{conjecture}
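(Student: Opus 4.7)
The starting point is Borel--Serre duality: since $\SL_n\Z$ is a virtual duality group of dimension $\binom{n}{2}$ with rational dualizing module the Steinberg module $\St_n\otimes\Q$, one has a natural isomorphism
$$H^{\binom{n}{2}-i}(\SL_n\Z;\Q) \;\cong\; H_i(\SL_n\Z;\St_n\otimes\Q),$$
so the Vanishing Conjecture is equivalent to showing
$$H_i(\SL_n\Z;\St_n\otimes\Q) = 0 \quad\text{for all } i<n-1.$$
Here $\St_n=\widetilde H_{n-2}(T_n;\Z)$ is the top reduced homology of the rational Tits building $T_n$ of $\SL_n\Q$, and by Solomon--Tits it is generated as an $\SL_n\Z$-module by apartment classes indexed by frames of $\Q^n$. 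This reformulation puts the problem squarely in the setting of (co)homology with nontrivial coefficients, which is the natural arena for an inductive argument.

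My plan is to proceed by induction on $n$, using a resolution (or partial resolution) of $\St_n\otimes\Q$ by $\SL_n\Z$-modules of the form $\Q[\SL_n\Z/P]\otimes M$, where $P$ runs over proper parabolic subgroups and $M$ is built from Steinberg modules of smaller rank. A natural candidate is the double complex coming from filtering the Tits building by the rank of a flag, of the kind used by Lee--Szczarba and Charney. Shapiro's lemma then converts the $E_1$ page into $H_*(P;M)$, and the Levi decomposition $P=L\ltimes U$ with $L$ of the form $\SL_k\Z\times\SL_{n-k}\Z$ (up to finite index) lets one feed in Borel's computation of the stable rational cohomology of $\SL_k\Z$ together with the inductive vanishing hypothesis applied to $\St_k$ with $k<n$.

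The main obstacle is the bookkeeping in this spectral sequence: one must verify that in each total degree $i<n-1$ every contributing $E_1$-term vanishes. This requires a tight numerical alignment between three inputs: Borel's stability range for $H^*(\SL_k\Z;\Q)$, the cohomology of the unipotent radicals $U$ (controlled by their structure as nilpotent groups), and the inductive vanishing for $\St_k$ with $k<n$. I expect the precise bound $n>i+1$ to emerge as the first value of $n$ for which these three inputs combine to force the required vanishing. A small base case (e.g.\ $n=2$ or $n=3$) will prime the induction; these follow from the known low-dimensional computations, for instance from the fact that $\SL_2\Z$ is virtually free, so $H^1(\SL_2\Z;\Q)=0$. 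The step whose difficulty I anticipate is showing that the differentials in the spectral sequence do not create unexpected classes in degrees $i<n-1$, as a priori they could glue together permissible $E_1$-terms from different columns into a nontrivial abutment.
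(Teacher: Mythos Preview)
The statement you are attempting to prove is labeled as a \emph{conjecture} in the paper, and the paper does not prove it. The authors explicitly note that only the cases $i=0$ (Lee--Szczarba) and $i=1$ (the authors' own work in \cite{CFP2}, or Bykovskii) are known; the general case remains open. What the paper does provide is a \emph{conditional} argument: if the Steinberg stabilization map $\phi_*$ of \eqref{eq:SLnsteinstab} were an isomorphism in the conjectured range, then the vanishing would follow, because $\phi_*\circ\phi_*=0$ on coinvariants. But that is itself Conjecture~\ref{conjecture:SLn}, also open.

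Your outline is a natural line of attack, but you should be aware that it aims at an open problem, and the specific ingredients you invoke do not obviously combine to give the stated bound. Two concrete concerns: first, Borel's stable range for $H^*(\SL_k\Z;\Q)$ is of the form $k\geq Ci$ with a constant that is not sharp enough to yield $n>i+1$; your expectation that ``the precise bound $n>i+1$ will emerge'' from Borel's theorem plus bookkeeping is optimistic. Second, the inductive scheme you describe risks circularity: the $E_1$-terms coming from a parabolic $P$ with Levi $\SL_k\Z\times\SL_{n-k}\Z$ involve $H_*(\SL_k\Z;\St_k)$ tensored with homology of the other factor and of the unipotent radical, and controlling these in the needed range is precisely the conjecture at smaller $n$ but in a degree range that may exceed what the inductive hypothesis supplies. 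The unipotent radical contributes nontrivial cohomology (it is a nilpotent group of dimension $k(n-k)$), and after accounting for this shift you may find that the required vanishing for the Levi factor lies outside the inductive range. These are the reasons the parabolic/Tits-building spectral sequence, though well known since Lee--Szczarba, has not resolved the conjecture.
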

For $i=0$, Conjecture~\ref{conjecture:SLnvanishing} is a theorem of Lee--Szczarba \cite{LS}, who proved that $H^{\binom{n}{2}}(\SL_n\Z;\Q)=0$ for all $n\geq 2$. We prove Conjecture~\ref{conjecture:SLnvanishing} for $i=1$ in \cite{CFP2}; this case can also be deduced from Bykovskii \cite[Theorem 2]{By}.
We will revisit the connection between Conjecture~\ref{conjecture:SLn} and Conjecture~\ref{conjecture:SLnvanishing} after describing the maps realizing the stability proposed in Conjecture~\ref{conjecture:SLn}.

\para{Computational evidence}
The rational cohomology groups of $\SL_n\Z$ have been completely computed for $2 \leq n \leq 7$.  These
calculations are summarized in Table~\ref{table:cohomology}.  The data in this table
is in agreement with Conjecture~\ref{conjecture:SLnvanishing}.
\begin{table}
\begin{centering}
\begin{tabular}{l@{\hspace{22pt}}l@{\hspace{22pt}}p{\QSq}@{\hspace{5pt}}p{\QSq}@{\hspace{5pt}}p{\QSq}@{\hspace{5pt}}p{\QSq}@{\hspace{5pt}}p{\QSq}@{\hspace{5pt}}p{\QSq}@{\hspace{5pt}}p{\QSq}@{\hspace{5pt}}p{\QSq}@{\hspace{5pt}}p{\QSq}@{\hspace{5pt}}p{\QSq}@{\hspace{5pt}}p{\QSq}@{\hspace{5pt}}p{\QSq}@{\hspace{5pt}}p{\QSq}@{\hspace{5pt}}p{\QSq}@{\hspace{5pt}}p{\QSq}@{\hspace{5pt}}p{\QSq}@{\hspace{5pt}}p{\QSq}@{\hspace{5pt}}p{\QSq}@{\hspace{5pt}}p{\QSq}@{\hspace{5pt}}p{\QSq}@{\hspace{5pt}}p{\QSq}@{}}
\toprule
$n$ & \!\!\!$\vcd$ & \multicolumn{21}{l}{\!\!\!\!$H^1(\SL_n\Z;\Q)$ \hspace{5pt} $H^2(\SL_n\Z;\Q)$ \hspace{5pt}  $\cdots$ \hspace{5pt} $H^{\vcd}(\SL_n\Z;\Q)$}      \\
\midrule
$2$ & $1$  & $0$ &     &      &     &        &     &     &      &      &      &      &     &     &      &      &     &     &     &     &     &     \\
$3$ & $3$  & $0$ & $0$ & $0$  &     &        &     &     &      &      &      &      &     &     &      &      &     &     &     &     &     &     \\
$4$ & $6$  & $0$ & $0$ & $\Q$ & $0$ & $0$    & $0$ &     &      &      &      &      &     &     &      &      &     &     &     &     &     &     \\
$5$ & $10$ & $0$ & $0$ & $0$  & $0$ & $\Q$   & $0$ & $0$ & $0$  & $0$  & $0$  &      &     &     &      &      &     &     &     &     &     &     \\
$6$ & $15$ & $0$ & $0$ & $0$  & $0$ & $\Q^2$ & $0$ & $0$ & $\Q$ & $\Q$ & $\Q$ & $0$  & $0$ & $0$ & $0$  & $0$  &     &     &     &     &     &     \\
$7$ & $21$ & $0$ & $0$ & $0$  & $0$ & $\Q$   & $0$ & $0$ & $0$  & $\Q$  & $0$  & $0$ & $0$ & $0$ & $\Q$ & $\Q$ & $0$ & $0$ & $0$ & $0$ & $0$ & $0$ \\
\bottomrule
\end{tabular}
\caption{The rational cohomology of $\SL_n\Z$ for $2 \leq n \leq 7$.  For $n=2$ this is classical;
for $n=3$ this was calculated by Soul\'e \cite{Soule}; for $n=4$, by 
Lee--Szczarba \cite{LeeSzczarbaTorsion}; and for $5 \leq n \leq 7$, by
Elbaz-Vincent--Gangl--Soul\'e \cite{ElbazVincentGanglSoule}. The classes in $H^3(\SL_4\Z;\Q)$, $H^8(\SL_6\Z;\Q)$, $H^{10}(\SL_6\Z;\Q)$, and $H^{15}(\SL_7\Z;\Q)$, as well as one dimension in $H^5(\SL_6\Z;\Q)$, are~unstable.}
\label{table:cohomology}
\end{centering}
\end{table}

\para{Possible approaches} An important feature of Conjecture~\ref{conjecture:SLn} is that there are
natural candidates for ``stabilization maps'' between $H^{\binom{n}{2}-i}(\SL_n\Z;\Q)$ and $H^{\binom{n+1}{2}-i}(\SL_{n+1}\Z;\Q)$ which could realize the  isomorphisms conjectured in Conjecture~\ref{conjecture:SLn}.\pagebreak

\para{Parabolic stabilization} We first give a topological construction of a stabilization map 
\begin{equation*}
H^{\binom{n+1}{2}-i}(\SL_{n+1}\Z;\Q) \rightarrow H^{\binom{n}{2}-i}(\SL_n\Z;\Q)
\end{equation*}
as follows.     The stabilizer in $\SL_{n+1}\Z$ of the subspace $\Q^n<\Q^{n+1}$ is isomorphic to the semi-direct product $\Z^n\rtimes \SL_n\Z$, where the normal subgroup 
$\Z^n$ consists of those automorphisms that restrict to the identity on $\Q^n$.   Note that the action of $\SL_n\Z$ on $\Z^n$ in this semi-direct product is the standard one; in particular $\SL_n\Z$ acts trivially on $H_n(\Z^n;\Z)$.  The extension
\[1\to \Z^n\to \Z^n\rtimes \SL_n\Z\to \SL_n\Z\to 1\] 
therefore yields a Gysin map $H^k(\Z^n\rtimes \SL_n\Z;\Z)\to H^{k-n}(\SL_n\Z;\Z)$.  Taking $k=\binom{n+1}{2}-i$ and passing to rational cohomology
yields the composition 
\begin{equation}
\label{eq:SLnstabmap}
H^{\binom{n+1}{2}-i}(\SL_{n+1}\Z;\Q)\to H^{\binom{n+1}{2}-i}(\Z^n\rtimes \SL_{n+1}\Z;\Q)\to H^{\binom{n}{2}-i}(\SL_n\Z;\Q)
\end{equation}
where the first map is restriction.  We conjecture that \eqref{eq:SLnstabmap} is an isomorphism for $n> i+1$, making explicit the stabilization in Conjecture~\ref{conjecture:SLn}.

\begin{remark}Iterating this process starting with $\SL_1\Z$ yields the group of strictly upper-triangular matrices $N_n$. This is the fundamental group of an $\binom{n}{2}$-dimensional nil-manifold, and thus provides an explicit witness for the lower bound $\vcd(\SL_n\Z)\geq \binom{n}{2}$, as follows. An elementary argument (sometimes called Shapiro's Lemma) shows that
\[H^*(\SL_n\Z;M)\approx H^*(N_n;\Q)\]
when $M\coloneq \Hom_{\Q N_n}(\Q \SL_n\Z, \Q)$. We thus have $H^{\binom{n}{2}}(\SL_n\Z;M)\approx H^{\binom{n}{2}}(N_n;\Q)\approx \Q$, demonstrating that $\vcd(\SL_n\Z)\geq \binom{n}{2}$. Despite this, it follows from \cite{LS} that the fundamental class $[N_n]\in H_{\binom{n}{2}}(\SL_n\Z;\Q)$ of this $\binom{n}{2}$-manifold is trivial in the rational homology of $\SL_n\Z$.
\end{remark}

\para{Duality groups} Our second approach to Conjecture~\ref{conjecture:SLn} would give a map in the other direction, namely a map
\[H^{\binom{n}{2}-i}(\SL_n\Z;\Q)\to H^{\binom{n+1}{2}-i}(\SL_{n+1}\Z;\Q).\]
Recall that a group $\Gamma$ is a {\em duality group} if there is an integer $\nu$ and a $\Z\Gamma$-module $D$, called the {\em dualizing module} for $\Gamma$, with the property that there are isomorphisms 
\[H^{\nu-i}(\Gamma;M)\approx H_{i}(\Gamma;M\otimes_{\Z} D)\]
for any $\Z\Gamma$-module $M$.  No group which contains torsion can be a duality group.  To remedy this, we say that a group $\Gamma$ is a {\em virtual duality group} if it has some finite index subgroup which is a duality group. This implies that there exists a {\em rational dualizing $\Q\Gamma$-module} $D$ so that 
\[H^{\nu-i}(\Gamma;M)\approx H_{i}(\Gamma;M\otimes_{\Q} D)\]
for any $\Q\Gamma$-module $M$.  The integer $\nu$ equals the virtual cohomological dimension 
$\vcd(\Gamma)$. See \cite{BE} or \cite[VIII.10]{Bro} for details.

\para{Duality for \boldmath$\SL_n\Z$}
The {\em spherical Tits building} $\B(\Q^n)$ is the complex of flags of nontrivial proper subspaces of $\Q^n$. By the Solomon--Tits Theorem, $\B(\Q^n)$ is homotopy equivalent to an infinite wedge of $(n-2)$-dimensional spheres.
The {\em Steinberg module} of $\SL_n\Z$ is defined to be 
\[\St(\SL_n\Z)\coloneq H_{n-2}(\B(\Q^n);\Q).\]
Since $\SL_n\Z$ acts on $\B(\Q^n)$ by simplicial automorphisms, $\St(\SL_n\Z)$ is a $\Q\SL_n\Z$-module.  
Borel--Serre  \cite[Theorem 11.4.2]{BS} proved that  $\SL_n\Z$ is a virtual duality group with dualizing module $\St(\SL_n\Z)$ and $\nu=\vcd(\SL_n\Z)=\binom{n}{2}$, so we have natural isomorphisms \[H^{\binom{n}{2}-i}(\SL_n\Z;\Q) \cong H_i(\SL_n\Z;\St(\SL_n\Z)).\]
Given this, Conjecture~\ref{conjecture:SLn} has the following equivalent restatement.

\begin{conjslnre}
For each $i\geq 0$, the group $H_i(\SL_n\Z;\St(\SL_n\Z))$ does not depend on $n$ for $n > i+1$.
\end{conjslnre}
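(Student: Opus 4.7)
My plan is to construct an explicit stabilization map on the Steinberg homology groups of Conjecture~\ref{conjecture:SLn} and to prove that it is an isomorphism in the stated range via a Quillen-style spectral sequence argument. Recall that $\St(\SL_n\Z)$ is spanned over $\Q$ by \emph{apartment classes} $[v_1,\ldots,v_n]$, one for each ordered basis of $\Q^n$, subject to antisymmetry and sum-of-subflag relations. Define a map $\sigma_n\colon\St(\SL_n\Z)\to\St(\SL_{n+1}\Z)$ on generators by
\[\sigma_n[v_1,\ldots,v_n] \coloneq [v_1,\ldots,v_n,e_{n+1}],\]
that is, by suspension of an apartment along the $(n+1)$-st coordinate line. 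Since $\sigma_n$ is equivariant for the standard inclusion $\SL_n\Z\hookrightarrow\SL_{n+1}\Z$ fixing $e_{n+1}$, it induces a stabilization map
\[H_i(\SL_n\Z;\St(\SL_n\Z))\longrightarrow H_i(\SL_{n+1}\Z;\St(\SL_{n+1}\Z)),\]
which one checks is Borel-Serre dual to the parabolic stabilization map \eqref{eq:SLnstabmap}.

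To show that this map is an isomorphism for $n>i+1$, I would apply the homological-stability machinery of Charney, Maazen, and van der Kallen. Let $X_n$ be the complex of partial bases of $\Z^n$, whose $p$-simplices are ordered $(p+1)$-tuples of primitive vectors extending to a basis; van der Kallen's acyclicity theorem gives $X_n$ a connectivity linear in $n$. The equivariant spectral sequence for the action of $\SL_n\Z$ on $X_n$ with coefficients in $\St(\SL_n\Z)$ takes the form
\[E^1_{p,q}=\bigoplus_{\sigma\in X_n^{(p)}/\SL_n\Z}H_q\bigl(\Stab(\sigma);\St(\SL_n\Z)\bigr)\Rightarrow H_{p+q}(\SL_n\Z;\St(\SL_n\Z))\]
in the stable range. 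Up to a unipotent radical invisible to rational homology, the stabilizer of a $p$-simplex is a copy of $\SL_{n-p-1}\Z$. If one can identify $H_q(\Stab(\sigma);\St(\SL_n\Z))\cong H_q(\SL_{n-p-1}\Z;\St(\SL_{n-p-1}\Z))$ in a range, the $E^1$-page matches column-by-column that of the analogous sequence for $\SL_{n+1}\Z$; comparing the two spectral sequences via the inclusion $X_n\hookrightarrow X_{n+1}$ and the apartment suspension $\sigma_n$, a five-lemma argument then yields the inductive step.

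The principal obstacle is precisely the required \textbf{parabolic restriction calculation for the Steinberg module}: for the parabolic $P\leq\SL_n\Z$ stabilizing a summand $\Z^m\subset\Z^n$, one must show that the restriction of $\St(\SL_n\Z)$ to $P$ admits a natural filtration whose Levi coinvariants split (on rational homology, in a range) as tensor products of the Steinberg modules of the Levi factors, and that this decomposition intertwines apartment suspension. Such a decomposition is visible in the building at infinity of the Borel-Serre bordification, but a version sufficiently precise for the spectral sequence comparison is not in the literature and will require genuinely new combinatorial input on flags in $\Q^n$. Granted this, the induction bottoms out at the known cases $i=0$ (Lee-Szczarba \cite{LS}) and $i=1$ (\cite{CFP2}; also derivable from Bykovskii \cite{By}), completing the proof.
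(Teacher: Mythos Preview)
The statement you are attempting to prove is a \emph{conjecture} in the paper, not a theorem; there is no proof in the paper to compare your argument against. Conjecture~\ref{conjecture:SLn} and its restated form remain open, and the paper presents them explicitly as such. Your first paragraph does recover the paper's own candidate stabilization map: your $\sigma_n$ is exactly the map $\phi$ of \eqref{eq:steinmap}, constructed there via the suspension embedding $F\colon S(\B(\Q^n))\to\B(\Q^{n+1})$ and realized on Ash's resolution as $\psi_L$.

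Beyond that, your sketch has two genuine gaps. First, the paper explicitly remarks that the coefficient systems $\St(\SL_n\Z)$ do \emph{not} satisfy Dwyer's growth condition for twisted homological stability, so the Charney--Maazen--van der Kallen machinery does not apply in any known form; the ``parabolic restriction calculation'' you flag as missing is not a technical loose end but the entire content of the conjecture. Second, and more tellingly, the paper proves that the composite $\phi_*\circ\phi_*$ is the \emph{zero map}: the order-4 element $\tau\in\SL_{n+2}\Z$ swapping the two appended lines shows that $\psi_{L'}\circ\psi_L$ is sent to its own negative on $\SL_{n+2}\Z$-coinvariants. Thus if your $\sigma_n$ is an isomorphism for two consecutive values of $n$, the groups in question must \emph{vanish}. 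Any successful execution of your inductive spectral-sequence comparison would therefore be proving Conjecture~\ref{conjecture:SLnvanishing} outright, and your closing sentence is misleading: the base cases $i=0$ and $i=1$ are vanishing results, not nontrivial stability isomorphisms, and the inductive step you envisage cannot be a nonzero isomorphism between nonzero groups.
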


In this form the conjecture looks like a standard formulation of homological stability.  However, the devil is in the details of the coefficient module $\St(\SL_n\Z)$, which itself is changing with $n$.

\begin{remark}
Dwyer \cite{DwyerTwisted} (see also van der Kallen \cite{vdK}) proved that the homology of $\SL_n\Z$ stabilizes
with respect to families of twisted coefficient systems satisfying certain growth conditions.  However, the coefficient systems $\St(\SL_n\Z)$ do not satisfy Dwyer's condition.
\end{remark}

\para{Steinberg stabilization}
We now construct an explicit candidate for a stabilization map
\[H_i(\SL_n\Z;\St(\SL_n\Z))\to H_{i}(\SL_{n+1}\Z;\St(\SL_{n+1}\Z)).\]
Choose any line $L$ in $\Q^{n+1}$ such that $L_\Z\coloneq L\cap \Z^{n+1}$ defines a splitting $\Z^{n+1}=\Z^n\oplus L_\Z$. This splitting determines an inclusion $\SL_n\Z\hookrightarrow \SL_{n+1}\Z$ as the subgroup stabilizing $\Q^n$ and acting trivially on $L$; in appropriate coordinates this inclusion has the form $A\mapsto \biggl(\begin{matrix}A&0\\0&1\end{matrix}\biggr)$. To define the desired map on homology we need to construct an $\SL_n\Z$-equivariant map
\begin{equation}
\label{eq:steinmap}
\phi\colon \St(\SL_n\Z)\to \St(\SL_{n+1}\Z).
\end{equation}

 We can construct the map $\phi$ by hand. We will use the line $L$ to define a embedding $F$ of the suspension $S(\B(\Q^n))$ into $\B(\Q^{n+1})$. We describe $F$ as a map \[F\colon [0,1]\times \B(\Q^n)\to \B(\Q^{n+1})\] with the property that $\{0\}\times \B(\Q^n)$ maps to the vertex $\Q^n$ (which is indeed a proper subspace of $\Q^{n+1}$) and $\{1\}\times \B(\Q^n)$ maps to the vertex $L$.

On $\{\frac{1}{2}\}\times \B(\Q^n)$, we define the map $F$ to be the natural inclusion of $\B(\Q^n)$ into $\B(\Q^{n+1})$ determined by considering subspaces of $\Q^n$ as subspaces of $\Q^{n+1}$. We can extend $F$ across $[0,\frac{1}{2}]\times \B(\Q^n)$ by linear interpolation, since the image of $\{\frac{1}{2}\}\times \B(\Q^n)$ lies inside the star of the vertex $\Q^n$ (indeed, it is precisely the star of $\Q^n$). Explicitly, every $d$-simplex determined by a chain $0\lneq V_0\lneq V_1\lneq \cdots\lneq V_d\lneq \Q^{n+1}$ satisfying $V_d\lneq \Q^n$ sits inside the $d+1$-simplex determined by the chain $0\lneq V_0\lneq V_1\lneq \cdots\lneq V_d\lneq \Q^n\leq \Q^{n+1}$.

Finally, on $\{\frac{3}{4}\}\times \B(\Q^n)$, we define the map $F$ to take the $d$-simplex determined by a chain $0\lneq V_0\lneq V_1\lneq \cdots\lneq V_d\lneq \Q^n$ to the $d$-simplex determined by the chain \begin{equation}
\label{eq:Lsimplex}
0\ \lneq\  V_0+L\ \lneq\ V_1+L\ \lneq\ \cdots\ \lneq\ V_d+L\ \lneq\ \Q^{n+1}.
\end{equation}
We can canonically extend $F$ across $[\frac{1}{2},\frac{3}{4}]\times \B(\Q^n)$, since the convex hull of the $2d+2$ vertices $\{V_0,\ldots,V_d,V_0+L,\ldots,V_d+L\}$ is isomorphic to the standard simplicial triangulation of the prism $\Delta^1\times \Delta^d\simeq [\frac{1}{2},\frac{3}{4}]\times \Delta^d$. On the remaining portion $[\frac{3}{4},1]\times \B(\Q^n)$ we define $F$ by linear interpolation, since the image of $\{\frac{3}{4}\}\times \B(\Q^n)$ lies in the star of the vertex $L$ (again, it is precisely the star of $L$). Explicitly, the $d$-simplex \eqref{eq:Lsimplex} lies inside the $(d+1)$-simplex determined by
\[
0\ \lneq\ L\ \lneq\  V_0+L\ \lneq\ V_1+L\ \lneq\ \cdots\ \lneq\ V_d+L\ \lneq\ \Q^{n+1}.
\]

The map $F\colon S(\B(\Q^n))\to \B(\Q^{n+1})$ we have described is $\SL_n\Z$-equivariant by construction. We define $\phi$ to be the induced map
\[\phi\colon \St(\SL_n\Z)= H_{n-2}(\B(\Q^n);\Q)\approx H_{n-1}(S(\B(\Q^n));\Q)\overset{F_*}{\longrightarrow} H_{n-1}(\B(\Q^{n+1});\Q)= \St(\SL_{n+1}\Z),\]
and we conjecture that the map
\begin{equation}
\label{eq:SLnsteinstab}
\phi_*\colon H_i(\SL_n\Z;\St(\SL_n\Z))\to H_{i}(\SL_{n+1}\Z;\St(\SL_{n+1}\Z)).
\end{equation}
induced by $\phi$ is an isomorphism for $n> i+1$. Note that since all lines $L$ satisfying $\Z^{n+1}=\Z^n\oplus L_\Z$ are equivalent under the action of $\SL_{n+1}\Z$, the map $\phi_*$ is independent of our choice of $L$.

\begin{remarknum}
Assuming that Conjecture~\ref{conjecture:SLn} holds, the ``parabolic stabilization'' map \eqref{eq:SLnstabmap} and the ``Steinberg stabilization'' map \eqref{eq:SLnsteinstab} should be inverse to each other, and indeed one approach to Conjecture~\ref{conjecture:SLn} would be to prove this relation.
\end{remarknum}

\para{Vanishing of unstable cohomology for \boldmath$\SL_n\Z$} 
Surprisingly, the conjectured stability of \eqref{eq:SLnsteinstab} already implies that $H^{\binom{n}{2}-i}(\SL_n\Z;\Q)$ vanishes for $n> i+1$. More specifically, iterating the map \eqref{eq:SLnsteinstab} twice yields the zero map, as we now explain.
We will need the following resolution of $\St(\SL_n\Z)$, which was first written down (in slightly different form) by Ash in \cite{Ash}, following Lee--Szczarba \cite{LS}. \pagebreak

\begin{definition}[{\bf Resolution of \boldmath$\St(\SL_n\Z)$}]
\label{def:resolution}
Let
$C_k=C^n_k$ be the free $\Q$-vector space on $(n+k)$-tuples $[L_1,\ldots,L_{n+k}]$ of lines in $\Q^n$, subject to the following two relations:
\begin{itemize}
\item $[L_1,\ldots,L_{n+k}]=0$ if $\spn(L_1,\ldots,L_{n+k})\neq \Q^n$.
\item  $[L_{\sigma \cdot 1},\ldots,L_{\sigma \cdot (n+k)}] = (-1)^{|\sigma|} [L_1,\ldots,L_{n+k}]$ for $\sigma\in S_{n+k}$.
\end{itemize}
Let $C_\bullet=C^n_\bullet$ be the complex obtained by taking the standard differential $\partial\colon C_k\to C_{k-1}$:
\[\partial[L_1,\ldots,L_{n+k}]=\sum_{i=1}^{n+k} (-1)^{i-1} [L_1,\ldots,\widehat{L_i},\ldots,L_{n+k}]\]
\end{definition}
\begin{proposition}[Ash \cite{Ash}]
\label{prop:resolution}
$H_0(C^n_\bullet)\approx \St(\SL_n\Z)$ and $H_i(C^n_\bullet)=0$ for $i>0$.
\end{proposition}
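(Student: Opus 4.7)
The plan is to realize $C^n_\bullet$ as (a shift of) the relative simplicial chain complex of a pair $(X,Y)$ whose homotopy type can be computed via the Solomon--Tits theorem.

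Let $X$ be the simplex on the (infinite) vertex set of all lines in $\Q^n$, and let $Y\subset X$ be the full subcomplex spanned by collections of lines that fail to span $\Q^n$. The two defining relations for $C^n_k$ say exactly that a basis for $C^n_k$ consists of oriented $(n+k-1)$-simplices of $X$ whose vertices span $\Q^n$, and the stated differential coincides with the simplicial boundary; this yields a natural identification
\[
C^n_k \;\cong\; C_{n+k-1}(X,Y;\Q).
\]
Since $X$ is contractible, the long exact sequence of the pair gives $H_i(C^n_\bullet)\cong \widetilde{H}_{n+i-2}(Y;\Q)$ for all $i\geq 0$.

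It therefore suffices to show $Y\simeq \B(\Q^n)$; the Solomon--Tits theorem then produces $\widetilde{H}_{n-2}(Y;\Q)=\St(\SL_n\Z)$ with all other reduced homology vanishing, which is what is asserted. To establish the equivalence I would apply Quillen's fiber theorem to the order-preserving map $f$ from the face poset of $Y$ to the poset of proper nonzero subspaces of $\Q^n$, sending a simplex $\sigma$ of $Y$ to its span $\spn(\sigma)$. For each proper nonzero $V\subset\Q^n$, the down-set $\{\sigma:\spn(\sigma)\leq V\}$ is the face poset of the simplex on the lines contained in $V$, whose order complex is contractible. Quillen's theorem then yields a homotopy equivalence between the barycentric subdivision of $Y$ and the order complex of proper nonzero subspaces, which is precisely $\B(\Q^n)$. (Alternatively one can apply the nerve lemma to the cover of $Y$ by the contractible subcomplexes $Y_W$ on the lines in a hyperplane $W\subset \Q^n$.)

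The main delicate point is to verify that the resulting isomorphism $H_0(C^n_\bullet)\cong \St(\SL_n\Z)$ is the natural ``apartment class'' map, carrying a basis frame $[L_1,\ldots,L_n]\in C^n_0$ to the fundamental class in $H_{n-2}(\B(\Q^n);\Q)$ of the apartment on $\{L_1,\ldots,L_n\}$. This compatibility is what allows the resolution to be applied elsewhere. Tracing through the construction, a frame in $C^n_0=C_{n-1}(X,Y)$ goes under the connecting homomorphism to the boundary $(n-2)$-sphere $\partial[L_1,\ldots,L_n]\in C_{n-2}(Y)$, and then under the Quillen map to an $(n-2)$-cycle in $\B(\Q^n)$ supported exactly on the apartment subcomplex. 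Checking this amounts to a combinatorial sign verification, so it should not present a serious obstacle.
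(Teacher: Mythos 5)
Your argument is correct and follows the standard route (due to Lee--Szczarba and Ash, whom the paper cites without reproducing a proof): identify $C^n_\bullet$ with the shifted relative chain complex of the full simplex $X$ on lines modulo the subcomplex $Y$ of non-spanning tuples, use contractibility of $X$ together with the fact that $C_m(X,Y)=0$ for $m<n-1$ to pass to $\widetilde H_{n+i-2}(Y;\Q)$, show $Y\simeq\B(\Q^n)$ via the Quillen fiber lemma applied to the span map, and finish with Solomon--Tits. One small caution: the parenthetical nerve-lemma alternative is not by itself a complete substitute, since the nerve of the hyperplane cover is not simplicially isomorphic to $\B(\Q^n)$ and would still need a separate argument to be identified with the building.
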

Since $C^n_k$ is a virtually free $\Q\SL_n\Z$-module, Proposition~\ref{prop:resolution} states that the complex $C^n_\bullet$ is a virtually free resolution of $\St(\SL_n\Z)$. Thus the coinvariant complex $C^n_\bullet\otimes_{\Q\SL_n\Z}\Q$ computes the homology of $\SL_n\Z$ with coefficients in $\St(\SL_n\Z)$:
\[H_i\big(C^n_\bullet\otimes_{\Q\SL_n\Z}\Q\big)\approx H_i(\SL_n\Z;\St(\SL_n\Z))\]

If we choose as before a line $L$ in $\Q^{n+1}$ inducing a splitting $\Z^{n+1}=\Z^n\oplus L_\Z$, we obtain a chain map $\psi_L\colon C^n_\bullet\to C^{n+1}_\bullet$ defined by
\[
\psi_L\colon [L_1,\ldots,L_{n+k}]\mapsto [L_1,\ldots,L_{n+k},L]
\]
Since \[\spn(L_1,\ldots,L_{n+k})=\Q^n\quad\iff\quad \spn(L_1,\ldots,L_{n+k},L)= \Q^{n+1},\] $\psi_L$ preserves the first relation above. That it preserves the second is obvious, so $\psi_L$  defines a map $C^n_k\to C^{n+1}_k$. To see that $\psi_L$ commutes with $\partial$, we need only observe that $[L_1,\ldots,L_{n+k},\widehat{L}]=0$, which follows from the first relation since $\spn(L_1,\ldots,L_{n+k})\subseteq \Q^n\subsetneq \Q^{n+1}$.

On homology, it follows from \cite{Ash} that the  map $H_0(C^n_\bullet)\to H_0(C^{n+1}_\bullet)$ induced by $\psi_L$ coincides with the map $\phi\colon \St(\SL_n\Z)\to \St(\SL_{n+1}\Z)$ defined above. Thus the induced map on coinvariants \[(\psi_L)_*\colon H_i\big(C_\bullet^n \otimes_{\Q\SL_n\Z}\Q\big)\to H_i \big(C_\bullet^{n+1} \otimes_{\Q\SL_{n+1}\Z}\Q\big)\] coincides with the map on homology $\phi_*$  from \eqref{eq:SLnsteinstab} induced by $\phi$. We will use this connection to show that iterating $\phi_*$ twice yields the zero map.

Choose a line $L'$ in $\Q^{n+2}$ so that $\Z^{n+2}=\Z^n\oplus L_\Z\oplus L'_\Z$, and consider the composition $\psi_{L'}\circ \psi_L$ defined by
\[\psi_{L'}\circ \psi_L\colon [L_1,\ldots,L_{n+k}]\mapsto [L_1,\ldots,L_{n+k},L,L'].\] Let $\tau\in \SL_{n+2}\Z$ be the unique element acting by the identity on $\Q^n$ and satisfying $\tau(L)=L'$ and $\tau(L')=L$ (this element is necessarily of order 4). We have \[\tau\circ \psi_{L'}\circ \psi_L\colon [L_1,\ldots,L_{n+k}]\mapsto [L_1,\ldots,L_{n+k},L',L].\] But by the second relation in Definition~\ref{def:resolution} we have $[L_1,\ldots,L_{n+k},L',L]=-[L_1,\ldots,L_{n+k},L,L']$, and so we conclude that
\begin{equation*}
\tau\circ \psi_{L'}\circ \psi_L=-(\psi_{L'}\circ \psi_L).
\end{equation*}
Since $\tau\in \SL_{n+2}\Z$, this identity tells us that the map on coinvariants
\begin{equation}
\label{eq:coinvariantsmap}
(\phi_{L'}\circ \phi_L)_*\colon C^n_\bullet\otimes_{\Q\SL_n\Z}\Q\to C_\bullet^{n+2} \otimes_{\Q\SL_{n+2}\Z}\Q
\end{equation}
is equal to its negation, and thus is the zero map. Certainly this implies that the induced map on homology
\[\phi_*\circ \phi_*\colon H_i(\SL_n\Z;\St(\SL_n\Z))\to H_i(\SL_{n+1}\Z;\St(\SL_{n+1}\Z))\to H_{i}(\SL_{n+2}\Z;\St(\SL_{n+2}\Z))\] vanishes. This shows that if $\phi_*$ is an isomorphism for $n>i+1$, as conjectured in Conjecture~\ref{conjecture:SLn}, then $H_i(\SL_n\Z;\St(\SL_n\Z))\approx H^{\binom{n}{2}-i}(\SL_n\Z;\Q)$ must vanish for $n>i+1$, as conjectured in Conjecture~\ref{conjecture:SLnvanishing}.

\para{Congruence subgroups} The reader might wonder why we have presented Conjecture~\ref{conjecture:SLn} as a stability conjecture, if it necessarily implies the vanishing of Conjecture~\ref{conjecture:SLnvanishing}. One key reason is that this vanishing relies on torsion elements in $\SL_n\Z$\,---\,for example, our argument above depends on the fact that the order-4 element $\tau$ lies in $\SL_{n+2}\Z$. We would not expect the same vanishing if we restrict our attention to some torsion-free, finite-index subgroup of $\SL_n\Z$. However, we do expect that the stability conjectured in Conjecture~\ref{conjecture:SLn} should persist in some form.

The strongest evidence in this direction is provided by a theorem of Ash on the  level-$N$ principal congruence subgroups $\Gamma_n(N)$, meaning the subgroup of matrices in $\SL_n\Z$ reducing to the identity in $\SL_n(\Z/N\Z)$. By \cite[Theorem 3.2]{BE}, the dualizing module $\St(\Gamma_n(N))$ for the duality group $\Gamma_n(N)$ is just $\St(\SL_n\Z)$ again. Thus we may restrict the ``Steinberg stabilization'' map $\phi_*$ from \eqref{eq:SLnsteinstab} to the finite index subgroup $\Gamma_n(N)$. In this context, the main theorem of \cite{Ash} has the following form.

\begin{theorem}[Ash~\cite{Ash}] For any $N>1$, the restriction of the ``Steinberg stabilization'' map $\phi_*$ to the level-$N$ principal congruence subgroup $\Gamma_n(N)$ yields for any $n$ an \textbf{injection}
\[\phi_*\colon H^{\binom{n}{2}-i}(\Gamma_n(N);\Q)\hookrightarrow H^{\binom{n+1}{2}-i}(\Gamma_{n+1}(N);\Q).\]
\end{theorem}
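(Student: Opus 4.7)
The crucial observation is that the vanishing $\phi_\ast\circ\phi_\ast=0$ established in \eqref{eq:coinvariantsmap} hinged on the order-$4$ element $\tau\in\SL_{n+2}\Z$ swapping $L$ and $L'$. Since $\tau$ acts as a nontrivial permutation on $(\Z/N\Z)^{n+2}$, we have $\tau\notin\Gamma_{n+2}(N)$ for every $N>1$. Hence the sign-cancellation that drove the vanishing argument is unavailable on congruence subgroups, which is what makes it plausible---and the theorem asserts---that $\phi_\ast$ is genuinely injective on $\Gamma_n(N)$.

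The plan is to construct, at the level of the Ash resolution of Definition~\ref{def:resolution}, a left inverse to the stabilization map on coinvariants. A natural first candidate is the retraction $r\colon C_\bullet^{n+1}\to C_\bullet^n$ that picks off the distinguished entry $L$: on a generator set
\[
r[M_1,\ldots,M_{n+k+1}] = \sum_{j\,:\,M_j=L} (-1)^{j-1}[M_1,\ldots,\widehat{M_j},\ldots,M_{n+k+1}],
\]
with the convention that a summand is zero unless the remaining $M_i$ all lie in $\Q^n$. A direct computation using the span condition in Definition~\ref{def:resolution} shows that $r$ is well-defined modulo the antisymmetry relation, commutes with $\partial$, and satisfies $r\circ\psi_L=\id$ on $C_\bullet^n$.

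The main obstacle is that $r$ does not descend to coinvariants as written: a generic $g\in\Gamma_{n+1}(N)$ fixes $L$ only modulo $N$, so $r(gc)$ can vanish while $r(c)$ does not. The remedy is to ``spread'' $r$ over the $\Gamma_{n+1}(N)$-orbit of $L$, or equivalently to reorganize the coinvariant complex by a double-coset decomposition with respect to the parabolic subgroup $P\coloneq\Stab_{\Gamma_{n+1}(N)}(L)=N\Z^n\rtimes\Gamma_n(N)$. A Shapiro-type identification then replaces $P$-coinvariants of the subcomplex of tuples containing $L$ with $\Gamma_n(N)$-coinvariants of $C_\bullet^n$, and produces a refined retraction that is honestly well-defined. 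The remaining technical point---and the heart of Ash's argument in~\cite{Ash}---is to show that the contributions of tuples not containing any $\Gamma_{n+1}(N)$-translate of $L$ do not obstruct the resulting splitting; this analysis is precisely where the absence of torsion elements of $\SL_{n+1}\Z$ in $\Gamma_{n+1}(N)$ is indispensable, mirroring at the chain level the role played by $\tau\notin\Gamma_{n+2}(N)$ in allowing injectivity rather than forcing vanishing.
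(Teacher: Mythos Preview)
The paper does not give its own proof of this theorem; it is stated as a reformulation of the main theorem of Ash~\cite{Ash} and simply cited. So there is nothing in the paper to compare your argument against at the level of detail.

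That said, your proposal is not a proof. The first paragraph is motivation, not argument: observing that $\tau\notin\Gamma_{n+2}(N)$ explains why the \emph{obstruction} to injectivity disappears, but removing an obstruction is not the same as establishing injectivity. In the second paragraph the retraction $r$ you write down has a sign problem (applying it to $\psi_L[L_1,\ldots,L_{n+k}]=[L_1,\ldots,L_{n+k},L]$ yields $(-1)^{n+k}[L_1,\ldots,L_{n+k}]$, not the identity), though this is cosmetic. The real gap is in the third paragraph: you correctly identify that $r$ does not descend to $\Gamma_{n+1}(N)$-coinvariants, and you gesture at a fix via ``spreading over the orbit'' and a ``Shapiro-type identification,'' but you neither construct the refined retraction nor verify that it is a chain map on coinvariants. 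You then explicitly defer ``the remaining technical point---and the heart of Ash's argument in~\cite{Ash}'' to Ash. Since the theorem you are asked to prove \emph{is} Ash's theorem, invoking Ash for the crux is circular.

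In short: your outline is a plausible reconstruction of the shape an argument might take, and the torsion observation matches the paper's own commentary, but the actual content of the proof---the construction of a well-defined splitting on $\Gamma_{n+1}(N)$-coinvariants and the verification that nothing else obstructs it---is asserted rather than supplied.
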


\para{Cocompact lattices} The lattice $\SL_n\Z$ is not cocompact in $\SL_n\R$. However, there are natural families of cocompact lattices in $\SL_n\R$. Another reason to think of Conjecture~\ref{conjecture:SLn} as a stability conjecture is that the conjectured stability does hold for these families of cocompact lattices, as we will prove below.

Since examples of such families are not so well known, we begin by giving an explicit construction of a family of cocompact lattices $\Gamma_n$ in $\SL_n\R$ with $\Gamma_n\subset\Gamma_{n+1}$.
Let $\sqrt[4]{2}$ denote the positive real fourth root of 2.
Given $x\in \Z[\sqrt[4]{2}]$, define $||x||^2\in \Z[\sqrt{2}]$ by writing $x=a+b\sqrt[4]{2}$ for some $a,b\in \Z[\sqrt{2}]$ and defining \[||x||^2=(a+b\sqrt[4]{2})(a-b\sqrt[4]{2})=a^2-\sqrt{2}b^2.\] Define $\Gamma_n$ to be the group of matrices with entries in $\Z[\sqrt[4]{2}]$ that preserve the corresponding Hermitian form; that is, let \[\Gamma_n\coloneq \SU_n\bigl(||x_1||^2+\cdots+||x_n||^2;\Z[\sqrt[4]{2}]\bigr).\] Then $\Gamma_n$ is a cocompact lattice in $\SL_{n}(\R)$, as we now explain. The group $\Gamma_n$ is the $\Z[\sqrt{2}]$-integer points of the simple algebraic group $G$ defined over $\Q(\sqrt{2})$ given by \[G\coloneq \SU_n\bigl(||x_1||^2+\cdots+||x_n||^2;\Q(\sqrt[4]{2})\bigr).\] The group $G$ is only algebraic over $\Q(\sqrt{2})$, not over $\Q(\sqrt[4]{2})$, for the same reason that $\SU(n)$ is only a real Lie group, not a complex Lie group. A well-known theorem of Borel and Harish-Chandra (see \cite[Theorem 4.14]{PR}) states that the $\Z$-points of a semisimple algebraic group $G$ over $\Q$ form a lattice in the real points $G(\R)$. 
In our situation, the corresponding theorem states that $\Gamma_n=G(\Z[\sqrt{2}])$ is a lattice in the product $G(\R)\times G^\sigma(\R)$, where $G(\R)$ and $G^\sigma(\R)$ are obtained from $G$ by the two embeddings of $\Q(\sqrt{2})$ into $\R$ (see \cite[\S2.1.2]{PR}). For a basic example of this phenomenon, note that although $\Z[\sqrt{2}]$ is not a discrete subset of $\R$, when it is embedded in $\R\times \R$ by $a+b\sqrt{2}\mapsto (a+b\sqrt{2},a-b\sqrt{2})$ its image is discrete and indeed a lattice.

By  \cite[Proposition 2.15(3)]{PR}, $G(\R)\approx \SL_n\R$. Since the other embedding $\sigma$ sends $\sqrt{2}\mapsto -\sqrt{2}$, we have
\[G^\sigma=\SU_n(||x_1||_\sigma^2+\cdots+||x_n||_\sigma^2;\Q(\textstyle{\sqrt{-\sqrt{2}}})),\]
where $||x||^2_\sigma$ is defined by writing $x\in \Q(\sqrt{-\sqrt{2}})$ as $x=a+b\sqrt{-\sqrt{2}}$ for $a,b\in \Q(\sqrt{2})$ and defining \[||x||_\sigma^2=(a+b\textstyle{\sqrt{-\sqrt{2}}})(a-b\textstyle{\sqrt{-\sqrt{2}}})=a^2+\sqrt{2}b^2.\] It is clear from this description that when we pass from $\Q(\sqrt{2})$ to $\R$, we obtain
\[G^\sigma(\R)=\SU_n(||x_1||_\sigma^2+\cdots+||x_n||_\sigma^2;\mathbb{C})=\SU(n).\]

We conclude that $\Gamma_n$ embeds as a lattice in $\SL_n\R \times \SU(n)$. Since $\sigma(\Gamma_n)$ is a subgroup of the compact group $\SU(n)$, it contains no unipotent elements, and so neither does $\Gamma_n$. This implies that $\Gamma_n$ acts cocompactly on $\SL_n\R\times \SU(n)$ (see \cite[\S2.1.4 and Theorem 4.17(3)]{PR}). But since $\SU(n)$ is compact, the projection of $\Gamma_n$ to the first factor $\SL_n\R$ remains discrete and cocompact. We conclude that $\Gamma_n=G(\Z[\sqrt{2}])$ is a cocompact lattice in $G(\R)=\SL_n\R$.  Note that there are natural inclusions $\Gamma_n\subset\Gamma_{n+1}$ for each $n\geq 1$.

\para{Stability for cocompact lattices} In the following, $\Gamma_n$ can be any family of cocompact lattices in $\SL_n\R$, not just the explicit family described above.
Since $\Gamma_n$ is a cocompact lattice in $\SL_n\R$, it acts properly discontinuously and cocompactly on the contractible symmetric space $\SL_n\R/\SO(n)$.  By Selberg's Lemma, $\Gamma_n$ has a finite index torsion-free subgroup, which acts freely on $\SL_n\R/\SO(n)$.  Thus  
\[\vcd(\Gamma_n)=\dim \SL_n\R-\dim \SO(n)=(n^2-1)-\binom{n}{2}=\binom{n+1}{2}-1.\]
The analogue of Conjecture~\ref{conjecture:SLn} for such a family of cocompact lattices $\Gamma_n$ is the following theorem.

\begin{theorem}
For each $i\geq 0$ the group 
$H^{\binom{n+1}{2}-1-i}(\Gamma_n;\Q)$ does not depend on $n$ for $n\gg i$. 
\end{theorem}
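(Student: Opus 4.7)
The plan is to combine virtual Poincaré duality with Borel's stability theorem for the low-dimensional rational cohomology of cocompact arithmetic lattices. Duality converts this near-top-dimensional statement into a statement near the bottom of the range, where stability is classical.

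For the duality step, $\Gamma_n$ acts properly discontinuously and cocompactly on the contractible symmetric space $X_n \coloneq \SL_n\R/\SO(n)$, which is diffeomorphic to $\R^d$ with $d \coloneq \binom{n+1}{2}-1$, and this action is orientation-preserving because $\SL_n\R$ is connected. Selberg's Lemma provides a torsion-free, finite-index normal subgroup $\Gamma_n'\trianglelefteq \Gamma_n$, and $\Gamma_n'\backslash X_n$ is then a closed oriented aspherical $d$-manifold. Poincaré duality on this manifold is $\Gamma_n/\Gamma_n'$-equivariant, and combined with the transfer isomorphism $H^*(\Gamma_n;\Q) \cong H^*(\Gamma_n';\Q)^{\Gamma_n/\Gamma_n'}$ (valid with $\Q$-coefficients because $\Gamma_n/\Gamma_n'$ is finite) it yields a natural isomorphism
\[ H^{\binom{n+1}{2}-1-i}(\Gamma_n;\Q) \cong H_i(\Gamma_n;\Q). \]

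It then suffices to show that $H_i(\Gamma_n;\Q)$ is independent of $n$ for $n \gg i$. For $n\geq 3$ every cocompact lattice in $\SL_n\R$ is arithmetic by Margulis's theorem, and for a compatible increasing family of cocompact arithmetic lattices this follows from the appropriate form of Borel's stability theorem \cite{Borel}: his computation of the stable rational cohomology of $\SL_n\Z$ via continuous cohomology of the ambient Lie group $\SL_n\R$ adapts directly to the cocompact setting, where Matsushima's formula identifies the stable value with the rational cohomology of the compact dual symmetric space $\SU(n)/\SO(n)$ in the stable range.

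The main obstacle is verifying that Borel's stability argument applies uniformly across an arbitrary compatible family of cocompact lattices, producing genuine isomorphisms between $H_i(\Gamma_n;\Q)$ and $H_i(\Gamma_{n+1};\Q)$ induced by the inclusion $\Gamma_n \hookrightarrow \Gamma_{n+1}$, rather than only abstract agreement of the groups. For the explicit arithmetic family constructed earlier this is routine, since the stabilization is induced from the ambient embeddings $\SL_n\R \hookrightarrow \SL_{n+1}\R$ used to define the family; but to cover \emph{any} compatible family one must check that the stabilization setup is sufficiently canonical for Borel's argument to run unchanged.
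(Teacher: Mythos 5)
Your proposal is essentially the same as the paper's proof: Poincar\'e duality for the rational cohomology of the closed locally symmetric space, followed by Matsushima's theorem identifying $H^i(\Gamma_n;\R)$ with $H^i(\SU(n)/\SO(n);\R)$ in a stable range, which is independent of $n$ once $n\gg i$. Two small clarifications. First, the appeal to Margulis arithmeticity is an unnecessary detour: both the injectivity of the map $\iota\colon H^*(\SU(n)/\SO(n);\R)\to H^*(\Gamma_n;\R)$ coming from $\SL_n\R$-invariant forms (via Hodge theory on the compact quotient) and Matsushima's surjectivity theorem apply to an \emph{arbitrary} cocompact lattice in $\SL_n\R$, arithmetic or not, and the paper uses only this. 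Second, your closing worry about whether the isomorphisms are induced by inclusions $\Gamma_n\hookrightarrow\Gamma_{n+1}$ is misplaced: the theorem claims only that the groups $H^{\binom{n+1}{2}-1-i}(\Gamma_n;\Q)$ agree abstractly for $n\gg i$, and the paper explicitly allows $\Gamma_n$ to be any family of cocompact lattices, with no compatibility assumed. Both your argument and the paper's establish this by identifying each group, for $n\gg i$, with the fixed vector space $H^i(\SU/\SO;\R)\cong \mathrm{gr}^i\bwedge^*\langle e_5,e_9,e_{13},\ldots\rangle$, so no inclusion-induced maps are required.
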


\begin{proof} For any lattice $\Gamma_n$ in $\SL_n\R$, let $X_n$ be the locally symmetric space \[X_n\coloneq \Gamma_n\backslash \SL_n\R/\SO(n).\] Since $\Gamma_n$ acts on the contractible space $\SL_n\R/\SO(n)$ with finite stabilizers we have $H^*(X_n;\Q)\approx H^*(\Gamma_n;\Q)$. 
Since $\Gamma_n$ is cocompact, the above remarks imply that $X_n$ is a finite quotient of a closed  aspherical manifold.  Thus its rational cohomology satisfies Poincar\'e duality, which gives:
\begin{equation}
\label{eq:duality1}
H^{\binom{n+1}{2}-1-i}(\Gamma_n;\Q)\approx H^i(\Gamma_n;\Q) \ \ \text{for each $i\geq 0$}
\end{equation}

The real cohomology of the compact symmetric space $\SU(n)/\SO(n)$ is isomorphic to the space of $\SL_n\R$-invariant forms on $\SL_n\R/\SO(n)$.  These forms are closed and indeed harmonic. Being $\SL_n\R$-invariant, these forms are \emph{a fortiori} $\Gamma_n$-invariant, and so they descend to harmonic forms on $X_n$.  Thus for any lattice $\Gamma_n$ we obtain a map \[\iota\colon H^*(\SU(n)/\SO(n);\R)\to H^*(X_n;\R)\approx H^*(\Gamma_n;\R)\] If $\Gamma_n$ is cocompact, applying Hodge theory to $X_n$ implies that $\iota$ is injective in all dimensions. Moreover a theorem of 
Matsushima  \cite{Mat} implies in this case that $\iota$ is in fact surjective in a linear range of dimensions. Thus for $n\gg i$ we have for any cocompact $\Gamma_n$ (see, e.g., \cite[\S11.4]{Borel}): \[H^i(\Gamma_n;\R)\approx H^i(\SU(n)/\SO(n);\R)\approx H^i(\SU/\SO;\R)\approx \text{gr}^i\bwedge^*\langle e_5,e_9,e_{13},e_{17},\ldots\rangle\] 
In particular $H^i(\Gamma_n;\R)$ is independent of $n$ for $n\gg i$.  Applying \eqref{eq:duality1} completes the proof.
\end{proof}
We remark that Borel's proof of homological stability for $H^i(\SL_n\Z;\R)$ mentioned earlier was accomplished  by showing that $\iota$ is an isomorphism for non-cocompact lattices as well,  albeit in a smaller range of dimensions.

\para{Automorphic forms}
We close this section by briefly mentioning a connection to automorphic forms.  We recommend
\cite{BorelSurvey}, \cite{SchwermerSurvey}, and \cite[Appendix A]{SteinBook} for general surveys of
the connection between automorphic forms and the cohomology of arithmetic groups.  Generalizing
a classical result of Eichler--Shimura, Franke \cite{FrankeBorelConjecture} proved that
the groups $H^{\ast}(\SL_n\Z;\Cpx)$ are isomorphic to spaces of certain automorphic forms 
on $\SL_n\R$ (those of ``cohomological type'').  This had previously been a conjecture of Borel.  This
space of automorphic forms is the direct sum of two pieces, the cuspidal cohomology and the Eisenstein
cohomology.  However, it was observed by Borel, Wallach, and Zuckermann that the cuspidal cohomology
is all concentrated around the middle range of the cohomology (see \cite[Proposition 3.5]{SchwermerHolomorphy}
for a precise statement).  This implies that in the range described by Conjecture~\ref{conjecture:SLn},
the cohomology consists entirely of Eisenstein cohomology.  From this perspective our
conjecture is related to assertions regarding which Eisenstein series contribute to cohomology
and how Eisenstein series for different $n$ are related by induction.

\section{Stability in the unstable cohomology of mapping class groups} 
\label{section:Modg}
Let $\Mod_g$ be the mapping class group of a closed, oriented, genus $g\geq 2$ surface, and let $\M_g$ 
be the moduli space of genus $g$ Riemann surfaces.  It is well-known (see, e.g., \cite[Theorem~12.13]{FM}) that
\begin{equation}
\label{eq:ModgMg}
H^*(\Mod_g;\Q) \approx H^*(\M_g;\Q).
\end{equation}
There has been a long-standing and fruitful analogy between mapping class groups and arithmetic groups such as $\SL_n\Z$.  This
analogy is particularly strong with respect to cohomological properties, and many of the results we have described for $\SL_n\Z$ have since been proved for $\Mod_g$. Harer \cite{Ha1} proved that $H^i(\Mod_g;\Z)$ does not depend on $g$ for $g\gg i$. 
He also proved \cite{Ha2}  that $\Mod_g$ is a virtual duality group with $\nu=\vcd(\Mod_g)=4g-5$. 
Motivated by Conjecture~\ref{conjecture:SLn}, we make the following conjecture on the unstable cohomology of $\Mod_g$.
\begin{conjecture}[{\bf Stable instability}]
\label{conjecture:mcg}
For each $i\geq 0$ the group $H^{4g-5-i}(\Mod_g;\Q)$ does not depend on $g$ for $g\gg i$.
\end{conjecture}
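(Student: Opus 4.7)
The plan is to mirror the strategy outlined for $\SL_n\Z$ in Section~\ref{section:SLn}. First, I would invoke Harer's theorem that $\Mod_g$ is a virtual duality group of dimension $\nu=4g-5$, with a dualizing module $\St(\Mod_g)$ realized as the top-dimensional rational homology of a Cohen--Macaulay complex of curves (or arcs) on $\Sigma_g$. Bieri--Eckmann duality then rephrases Conjecture~\ref{conjecture:mcg} as the assertion that $H_i(\Mod_g;\St(\Mod_g))$ is independent of $g$ for $g\gg i$. This is a homological-stability statement, albeit with coefficients in a nontrivial $g$-dependent module --- exactly the framework addressed for $\SL_n\Z$.

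Next I would construct an explicit stabilization map. An embedding $\Sigma_g\hookrightarrow\Sigma_{g+1}$ by attaching a handle induces an inclusion $\Mod_g\hookrightarrow\Mod_{g+1}$ (passing through $\Mod_{g,1}$ to make this canonical). Using this geometry, build a $\Mod_g$-equivariant map $\psi\colon\St(\Mod_g)\to\St(\Mod_{g+1})$ following the template of the $\psi_L$ appearing in the $\SL_n\Z$ discussion: at the chain level $\psi$ should append the meridian of the new handle (a distinguished simple closed curve on $\Sigma_{g+1}\setminus\Sigma_g$) to a top-dimensional cycle in the curve/arc complex of $\Sigma_g$. Conjecture~\ref{conjecture:mcg} would follow if the induced map $\psi_*\colon H_i(\Mod_g;\St(\Mod_g))\to H_i(\Mod_{g+1};\St(\Mod_{g+1}))$ is an isomorphism for $g\gg i$.

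To prove such stability one would set up the standard spectral-sequence machinery of homological stability, but in twisted coefficients. Concretely, the hope would be to produce a virtually free resolution of $\St(\Mod_g)$ by $\Q\Mod_g$-modules supported on ordered tuples of simple closed curves that fill $\Sigma_g$ in an appropriate sense --- the direct analog of the Lee--Szczarba--Ash complex of Definition~\ref{def:resolution} --- and to equip this resolution with a chain-level stabilization extending $\psi$. One would then identify the relevant relative complexes and bound their connectivity linearly in $g$; a standard long-exact-sequence argument on coinvariants, combined with Harer's (untwisted) stability, would then yield the conjecture.

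The main obstacle, and what makes this a genuine conjecture rather than a theorem, is the relative rigidity of curve and arc complexes compared to the spherical Tits building. Where $\B(\Q^n)$ admits the clean Solomon--Tits description and the Lee--Szczarba--Ash presentation yields a manifestly combinatorial resolution of $\St(\SL_n\Z)$, the rational structure of $\St(\Mod_g)$ is poorly understood; even formulating the right analog of Definition~\ref{def:resolution} --- a finite list of ``spanning'' and ``antisymmetry'' relations among tuples of simple closed curves that kills all higher homology --- is already a nontrivial problem. Proving linear high-connectivity of the corresponding complexes (in particular the relative complex controlling the cokernel of $\psi$) is the step I would expect to absorb nearly all the work, and is presumably why, despite the strong analogy with $\SL_n\Z$, the result remains conjectural.
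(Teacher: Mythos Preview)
This statement is a conjecture, so there is no proof in the paper to compare against; the paper only sketches possible approaches and identifies obstacles. Your overall strategy---reformulate via Bieri--Eckmann duality as stability for $H_i(\Mod_g;\St(\Mod_g))$, then build an equivariant stabilization map on Steinberg modules and run a connectivity argument---is exactly the ``Steinberg stabilization'' approach the paper discusses in \S\ref{section:Modg}.

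However, two concrete gaps in your plan are already flagged in the paper. First, there is no inclusion $\Mod_g\hookrightarrow\Mod_{g+1}$: the closed-surface mapping class group does not embed in the next one, and ``passing through $\Mod_{g,1}$'' does not fix this. One must work instead with $\Mod_g^1$, which does map both to $\Mod_{g+1}^1$ and to $\Mod_g$, and then separately prove that $H_i(\Mod_g^1;\St(\Mod_g))\to H_i(\Mod_g;\St(\Mod_g))$ is an isomorphism (Conjecture~\ref{conjecture:forgetboundary}). Second, and more seriously, the map $\psi$ you describe---append a distinguished simple closed curve in the new handle to a top cycle in the curve complex---is precisely the ``natural first guess'' that the paper reports is the \emph{zero map} $\St(\Mod_g)\to 0\to \St(\Mod_{g+1})$, by Broaddus \cite[Proposition~4.5]{Br}. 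This is strictly worse than the $\SL_n\Z$ situation, where $\phi$ is nonzero on Steinberg modules and only vanishes after taking coinvariants twice. So your proposed stabilization map carries no information at all, and constructing a nonzero equivariant map $\St(\Mod_g)\to\St(\Mod_{g+1})$ is posed in the paper as an open problem. Finally, a resolution of $\St(\Mod_g)$ analogous to Ash's already exists (Broaddus's chord-diagram resolution \cite[Proposition~3.3]{Br}); the difficulty is not building the resolution but finding a stabilization map compatible with it that is not identically zero.
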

We describe in \eqref{eq:Mess} below a stabilization map analogous to \eqref{eq:SLnstabmap} that should realize the isomorphisms conjectured in Conjecture~\ref{conjecture:mcg}. This philosophy was recently applied in \cite{CFP} to prove Conjecture~\ref{conjecture:mcg} for $i=0$ (this was also proved independently by Morita--Sakasai--Suzuki \cite{MSS} using different methods, and had been announced some years ago by Harer). However, as before, this approach has the consequence that if our conjectured stabilization map is an isomorphism for $g\gg i$,  then the ``stable unstable cohomology'' of $\Mod_g$ must vanish.
\begin{conjecture}[{\bf Vanishing Conjecture}]
\label{conjecture:mcgvanishing}
For each $i\geq 0$ we have $H^{4g-5-i}(\Mod_g;\Q)=0$ for $g\gg i$.
\end{conjecture}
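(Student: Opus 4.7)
The strategy is to mimic the argument for Conjecture~\ref{conjecture:SLnvanishing} given above. By Harer's virtual duality the target is equivalent to $H_i(\Mod_g;\St(\Mod_g))=0$ for $g\gg i$, and by the same reasoning that forced vanishing from stability in the case of $\SL_n\Z$, it suffices to produce a chain-level stabilization map $\psi_\gamma\colon C_\bullet^g\to C_\bullet^{g+1}$ on a virtually free resolution of $\St(\Mod_g)$ whose double iterate $\psi_{\gamma'}\circ\psi_\gamma$ is killed, after passing to $\Mod_{g+2}$-coinvariants, by a torsion element of $\Mod_{g+2}$. Combined with the conjectured isomorphism in the stable range, this will force the common group to vanish.

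The first step is to find an analogue of Ash's resolution (Definition~\ref{def:resolution}) for $\St(\Mod_g)$. The natural candidate is a complex generated in each degree by ordered tuples $[\gamma_1,\ldots,\gamma_m]$ of isotopy classes of disjoint simple closed curves on $\Sigma_g$ which ``fill'' in an appropriate sense, modulo the antisymmetry relation $[\gamma_{\sigma\cdot 1},\ldots,\gamma_{\sigma\cdot m}]=(-1)^{|\sigma|}[\gamma_1,\ldots,\gamma_m]$, with differential deleting one entry at a time. The stabilization map $\psi_\gamma$ would append a distinguished new curve $\gamma$ coring the handle that $\Sigma_g$ acquires inside $\Sigma_{g+1}$, directly mirroring $\psi_L$ above. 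One would then need to verify that this chain map induces, at the level of $H_0\cong\St(\Mod_g)$, the stabilization map conjectured in Conjecture~\ref{conjecture:mcg}.

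Granting these ingredients, the rest is formal. Choose two disjoint ``new'' handles $H,H'\subset\Sigma_{g+2}\smallsetminus\Sigma_g$ with core curves $\gamma,\gamma'$, and let $\tau\in\Mod_{g+2}$ be an order-two mapping class fixing $\Sigma_g$ pointwise and swapping $H$ with $H'$; such a $\tau$ is easy to produce by embedding $H\cup H'$ symmetrically with respect to a $\pi$-rotation. Then $\tau\circ\psi_{\gamma'}\circ\psi_\gamma=-(\psi_{\gamma'}\circ\psi_\gamma)$ by the antisymmetry relation, so after passing to $\Mod_{g+2}$-coinvariants, where $\tau$ acts trivially, we obtain $(\psi_{\gamma'}\circ\psi_\gamma)_*=-(\psi_{\gamma'}\circ\psi_\gamma)_*$ and hence zero, exactly as in \eqref{eq:coinvariantsmap}.

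The main obstacle is the first step. Ash's resolution rests on the rigid linear-algebraic structure of lines in $\Q^n$, and no equally tidy virtually free resolution of $\St(\Mod_g)$ by ordered tuples of simple closed curves with antisymmetry is currently known. Producing one, and verifying that the resulting chain-level stabilization genuinely realizes the conjectured map on $\St(\Mod_g)$, is where essentially all of the technical work lies; a plausible starting point would be a suitable equivariant filtration of Harer's arc complex or of a cut-system complex. Once the resolution is in hand, the torsion-element half of the argument should proceed verbatim, since symmetric embeddings of pairs of handles are standard in surface topology.
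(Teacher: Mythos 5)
Your proposal transplants the Steinberg-stabilization argument from the $\SL_n\Z$ case, conditioning on the existence of an Ash-style resolution of $\St(\Mod_g)$ and an antisymmetric chain-level stabilization map. The paper takes a genuinely different (and shorter) route: it works with the \emph{Mess} (parabolic-type) stabilization map \eqref{eq:Mess}, which is the Gysin composition \eqref{eq:MessGysin}, and shows \emph{directly and unconditionally} that this composition is zero on rational cohomology. The key observation is that the inclusion $\Mod_g^1\times\Z\hookrightarrow\Mod_{g+1}$ factors through the curve stabilizer $\Mod_g^1\times_\Z\Mod_1^1$, and that $\Mod_1^1/\langle T_\gamma\rangle\cong\SL_2\Z$, so the relevant piece of the Gysin map factors through $H^1(\SL_2\Z;\Q)=0$. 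This gives the implication (Conjecture~\ref{conjecture:mcg} via \eqref{eq:Mess}) $\Rightarrow$ Conjecture~\ref{conjecture:mcgvanishing} without touching the Steinberg module at all, which is a cleaner argument than your proposed route and does not rely on any unresolved constructions.

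Your route also runs into a concrete, known obstruction that is more serious than the difficulty you flag. An analogue of Ash's resolution \emph{does} exist: Broaddus's resolution of $\St(\Mod_g)$ by chord diagrams \cite[Prop.\ 3.3]{Br}. But by \cite[Proposition 4.5]{Br}, the natural stabilization map in Broaddus's framework is already identically zero as a map $\St(\Mod_g)\to\St(\Mod_{g+1})$\,---\,not merely zero after passing to coinvariants or after iterating twice, as in \eqref{eq:coinvariantsmap}, but zero on the nose before you ever invoke any torsion element. So there is nothing left for your order-two mapping class $\tau$ to do, and more importantly there is no candidate chain map that could realize the conjectured stabilization in Conjecture~\ref{conjecture:mcg}. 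The paper records exactly this failure and poses the construction of a nonzero $\Mod_g^1$-equivariant map $\St(\Mod_g)\to\St(\Mod_{g+1})$ as an open problem; your proposal implicitly assumes this open problem has a solution with the antisymmetry structure you need. In short: your "main obstacle" is not merely that the resolution hasn't been written down, but that the natural one has been and its natural stabilization vanishes, which is why the paper instead works on the Mess/parabolic side.
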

Morita--Sakasai--Suzuki have pointed out in \cite[Remark 7.5]{MSS2} that Kontsevich has formulated a conjecture in \cite{Kontsevich} that would imply Conjecture~\ref{conjecture:mcgvanishing}.

\para{Computational evidence}
Complete calculations of $H^{\ast}(\Mod_g;\Q)$ are only known for $1 \leq g \leq 4$.  These
calculations are summarized in Table~\ref{table:cohomologymod}.  

\begin{table}
\begin{centering}
\begin{tabular}{l@{\hspace{26pt}}l@{\hspace{26pt}}p{\QMod}@{\hspace{5pt}}p{\QMod}@{\hspace{5pt}}p{\QMod}@{\hspace{5pt}}p{\QMod}@{\hspace{5pt}}p{\QMod}@{\hspace{5pt}}p{\QMod}@{\hspace{5pt}}p{\QMod}@{\hspace{5pt}}p{\QMod}@{\hspace{5pt}}p{\QMod}@{\hspace{5pt}}p{\QMod}@{\hspace{5pt}}p{\QMod}@{}}
\toprule
$g$ & \!\!\!$\vcd$ & \multicolumn{11}{l}{\!\!\!\!$H^1(\Mod_g;\Q)$ \hspace{5pt} $H^2(\Mod_g;\Q)$ \hspace{4pt}  $\cdots$ \hspace{4pt} $H^{\vcd}(\Mod_g;\Q)$}      \\
\midrule
$1$ & $1$  & $0$ &      &     &      &      &      &     &     &     &     &     \\
$2$ & $3$  & $0$ & $0$  & $0$ &      &      &      &     &     &     &     &     \\
$3$ & $7$  & $0$ & $\Q$ & $0$ & $0$  & $0$  & $\Q$ & $0$ &     &     &     &     \\
$4$ & $11$ & $0$ & $\Q$ & $0$ & $\Q$ & $\Q$ & $0$  & $0$ & $0$ & $0$ & $0$ & $0$ \\
\bottomrule
\end{tabular}
\caption{The rational cohomology of $\Mod_g$ for $1 \leq g \leq 4$.  For $g=1$ this is classical;
for $g=2$ this was calculated by Igusa \cite{Igusa}; for $g=3$, by
Looijenga \cite{Looijenga}; and for $g=4$, by Tommasi \cite{Tommasi}. The classes in $H^6(\Mod_3;\Q)$ and $H^5(\Mod_4;\Q)$ are unstable.}
\label{table:cohomologymod}
\end{centering}
\end{table}

\para{Mess stabilization} There is a natural analogue for $\Mod_g$ of our first stabilization map \eqref{eq:SLnstabmap} for $\SL_n\Z$. The following  topological construction provides a candidate for a stabilization map 
\begin{equation*}
H^{4(g+1)-5-i}(\Mod_{g+1};\Q)\to H^{4g-5-i}(\Mod_g;\Q)
\end{equation*}
 which could realize the  isomorphisms conjectured in Conjecture~\ref{conjecture:mcg}.
Let $S_g^1$ be a compact oriented genus $g$ surface with one boundary component
and let $\Mod_g^1$ 
be its mapping class group. Johnson proved that there is a short exact sequence
\begin{equation}
\label{eq:birman}
1\to \pi_1(T^1S_g)\to \Mod_g^1\to \Mod_g\to 1
\end{equation}
where $T^1S_g$ is the unit tangent bundle of the closed surface $S_g$. Since $T^1S_g$ is a 3--manifold and $\Mod_g$ acts trivially on $H_3(T^1S_g;\Z)$, we obtain a Gysin map $H^k(\Mod_g^1;\Z)\to H^{k-3}(\Mod_g;\Z)$.

Similarly, there is a Gysin map $H^k(\Mod_g^1\times \Z;\Z)\to H^{k-1}(\Mod_g^1;\Z)$ coming from the trivial extension \[1\to \Z\to \Mod_g^1\times \Z\to \Mod_g^1\to 1.\]
Finally, the injection $\Mod_g^1\times \Z\hookrightarrow \Mod_{g+1}$ given by sending the generator of $\Z$ to the Dehn twist $T_\delta$ around a nonseparating curve $\delta$ supported in $S_{g+1}\setminus S_g^1$ induces 
the restriction $H^k(\Mod_{g+1};\Q)\to H^k(\Mod_g^1\times \Z;\Q)$. Consider the composition:
\begin{equation}
\label{eq:MessGysin}
H^k(\Mod_{g+1};\Q)\to H^k(\Mod_g^1\times \Z;\Q)\to H^{k-1}(\Mod_g^1;\Q)\to H^{k-4}(\Mod_g;\Q)
\end{equation}
Taking $k=4(g+1)-5-i$, \eqref{eq:MessGysin} yields a map
\begin{equation}
\label{eq:Mess}
H^{4(g+1)-5-i}(\Mod_{g+1};\Q)\to H^{4g-5-i}(\Mod_g;\Q)
\end{equation}
which we conjecture is an isomorphism for $g\gg i$.
\begin{remark}
We refer to the map \eqref{eq:Mess} as \emph{Mess stabilization} because this construction was first used by Mess in \cite{Mess} to construct a subgroup $K<\Mod_g$ isomorphic to the fundamental group of a closed aspherical $(4g-5)$-manifold. This gives an explicit witness for the lower bound $\vcd(\Mod_g)\geq 4g-5$, although it follows from \cite{CFP} that the fundamental class $[K]\in H_{4g-5}(\Mod_g;\Q)$ itself vanishes rationally.
\end{remark}

\para{Vanishing of the unstable cohomology of \boldmath$\Mod_g$} We saw in \S\ref{section:SLn} that our stability conjecture for $\SL_n\Z$ necessarily implies vanishing of the cohomology in the stable range. Similarly, it turns out that if \eqref{eq:Mess} is an isomorphism then we must have $H^{4g-5-i}(\Mod_g;\Q)=0$ for $g\gg i$.

The reason is that the injection $\Mod_g^1\times \Z\hookrightarrow \Mod_{g+1}$ used in the construction of \eqref{eq:Mess} factors through the inclusion $\Mod_g^1\times_\Z \Mod_1^1\hookrightarrow \Mod_{g+1}$. This subgroup is the stabilizer of a curve $\gamma$ separating $S_{g+1}$ into two components (homeomorphic to $S_g^1$ and $S_1^1$), and the two resulting factors $\Mod_g^1$ and $\Mod_1^1$ are amalgamated over the cyclic subgroup $\langle T_\gamma\rangle\approx \Z$ generated by a Dehn twist about the separating curve $\gamma$ itself. We can write the first two maps in \eqref{eq:MessGysin} as \[H^k(\Mod_{g+1};\Q)\to H^k(\Mod_g^1\times \Z;\Q)\twoheadrightarrow H^{k-1}(\Mod_g^1;\Q)\otimes H^1(\Z;\Q)\overset{\approx}{\longrightarrow} H^{k-1}(\Mod_g^1;\Q).\]
We can factor this instead through $\Mod_g^1\times_\Z\Mod_1^1$. But the quotient $\Mod_1^1/\langle T_\gamma\rangle$ of $\Mod_1^1$ by its center is isomorphic to $\SL_2\Z$ \cite[\S2.2.4]{FM}, and so we rewrite the above as
\[H^k(\Mod_{g+1};\Q)\to H^k(\Mod_g^1\times_\Z \Mod_1^1;\Q)\to H^{k-1}(\Mod_g^1;\Q)\otimes H^1(\SL_2\Z;\Q)\to H^{k-1}(\Mod_g^1;\Q).\]
Since $H^1(\SL_2\Z;\Q)=0$, we conclude that \eqref{eq:MessGysin} is the zero map on rational cohomology.

As before, this vanishing depends on torsion phenomena (although this does not lift to torsion in $\Mod_1^1$ itself), and we expect that no such vanishing would be present if we restricted to a congruence subgroup of $\Mod_g$. The vanishing of \eqref{eq:coinvariantsmap} for $\SL_n\Z$ can be thought of as coming from the vanishing of $H^1(\SL_2\Z;\Q)$, especially in light of \cite[Main Theorem]{Ash}, and it is curious that the vanishing of our stabilization maps for mapping class groups hinges on the same fact.  Of course, if we restricted to a congruence subgroup of $\Mod_g$, the group $\SL_2\Z$ in the calculation above would be replaced by the principal congruence subgroup $\Gamma_2(N)$, and $H^1(\Gamma_2(N);\Q)\neq 0$ for any $N>1$.  

\para{Duality for \boldmath$\Mod_g$} If we could construct an analogue of the ``Steinberg stabilization'' map \eqref{eq:SLnsteinstab}, it would give a map in the other direction:
\begin{equation}
\label{eq:mcgsteinstab}
H^{4g-5-i}(\Mod_{g};\Q)\to H^{4(g+1)-5-i}(\Mod_{g+1};\Q)
\end{equation}

 Let $\C_g$ be the \emph{curve complex}, which is the simplicial complex whose $k$-simplices consist of $(k+1)$-tuples of isotopy classes of mutually disjoint simple closed curves on $S_g$.  Harer \cite[Theorem 3.5]{Ha2} proved that $\C_g$ has the homotopy type of an infinite wedge of $(2g-2)$-dimensional spheres, and the rational dualizing module for $\Mod_g$ is the Steinberg module
$\St(\Mod_g)\coloneq H_{2g-2}(\C_g;\Q).$
By definition, $\St(\Mod_g)$ satisfies
\[H^{4g-5-i}(\Mod_g;\Q)\approx H_i(\Mod_g;\St(\Mod_g))\]
for all $i\geq 0$. This gives the following equivalent formulation of Conjecture~\ref{conjecture:mcg}.

\begin{conjmcgre}
For each $i\geq 0$ the group $H_i(\Mod_g;\St(\Mod_g))$ does not depend on $g$ for $g\gg i$.
\end{conjmcgre}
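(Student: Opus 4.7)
The plan is to deduce the restated conjecture directly from Conjecture~\ref{conjecture:mcg} by invoking Harer's virtual duality theorem for $\Mod_g$, exactly in parallel with the way Conjecture~\ref{conjecture:SLn} was restated via Borel--Serre duality.

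First I would recall Harer's theorem \cite{Ha2} that $\Mod_g$ is a virtual duality group of dimension $\nu=\vcd(\Mod_g)=4g-5$, with rational dualizing module $\St(\Mod_g)=H_{2g-2}(\C_g;\Q)$. By the Bieri--Eckmann formalism reviewed in Section~\ref{section:SLn}, applied with trivial coefficients $M=\Q$, this yields a natural isomorphism
\[
H^{4g-5-i}(\Mod_g;\Q)\cong H_i(\Mod_g;\St(\Mod_g))
\]
for every $i\geq 0$ and every $g\geq 2$. The claim that the left-hand side is independent of $g$ for $g\gg i$ is precisely Conjecture~\ref{conjecture:mcg}; the claim that the right-hand side is independent of $g$ for $g\gg i$ is precisely the restated conjecture. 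So the two formulations are formally equivalent, and a proof of one is automatically a proof of the other.

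The value of the restatement is that it recasts the conjecture in a form resembling twisted-coefficient homological stability, which suggests a proof strategy parallel to the ``Steinberg stabilization'' approach from Section~\ref{section:SLn}. Concretely, I would construct an analogue of the chain map $\phi$ of~\eqref{eq:steinmap}: given a genus-$1$ subsurface of $S_{g+1}$ bounded by a separating curve $\gamma$ and containing a nonseparating curve $\delta$ in its interior, one can form a ``double suspension'' embedding $\C_g\hookrightarrow \C_{g+1}$ by coning successively with $\gamma$ and $\delta$. This bridges the dimension gap between $\St(\Mod_g)=H_{2g-2}(\C_g;\Q)$ and $\St(\Mod_{g+1})=H_{2g}(\C_{g+1};\Q)$ and yields a candidate Steinberg stabilization map $\phi\colon \St(\Mod_g)\to \St(\Mod_{g+1})$ which is $\Mod_g^1$-equivariant.

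The hard part, of course, is showing that the induced map $\phi_*\colon H_i(\Mod_g;\St(\Mod_g))\to H_i(\Mod_{g+1};\St(\Mod_{g+1}))$ is an isomorphism for $g\gg i$. The obstacle flagged in the $\SL_n\Z$ case applies here as well: standard twisted homological stability techniques such as those of Dwyer \cite{DwyerTwisted} and van der Kallen \cite{vdK} do not apply, since $\St(\Mod_g)$ fails the required growth conditions. What seems to be needed is a mapping class group analogue of Ash's resolution from Definition~\ref{def:resolution} --- a virtually free $\Q\Mod_g$-resolution $D^g_\bullet$ of $\St(\Mod_g)$ built from filling tuples of simple closed curves on $S_g$, together with an explicit stabilization chain map $D^g_\bullet\to D^{g+1}_\bullet$ realizing $\phi$ --- after which one would analyze the resulting double complex via a spectral sequence, using Harer's connectivity bounds for the curve complex as the key geometric input. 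Producing such a resolution with controllable stabilizers appears to be the central difficulty.
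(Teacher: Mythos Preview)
Your derivation of the equivalence between the restated conjecture and Conjecture~\ref{conjecture:mcg} via Harer's virtual duality is correct and is exactly how the paper justifies the restatement; there is nothing more to that step.

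Where your proposal goes astray is in the strategy you sketch for actually \emph{proving} the conjecture. The ``double suspension'' map you construct --- coning a sphere in $\C_g$ successively with a separating curve $\gamma$ and a nonseparating curve $\delta$ in the added genus-$1$ piece --- is precisely the natural first guess for a stabilization map $\St(\Mod_g)\to\St(\Mod_{g+1})$, and the paper explicitly warns (citing Broaddus \cite[Proposition~4.5]{Br}) that this map is \emph{identically zero}: not merely zero on coinvariants as happened for $\SL_n\Z$ in \eqref{eq:coinvariantsmap}, but already the zero map $\St(\Mod_g)\to 0\to\St(\Mod_{g+1})$. Your candidate $\phi$ therefore cannot realize the conjectured isomorphism, and the paper poses the construction of a nonzero equivariant map as an open problem (Problem~10).

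Two smaller points: the Ash-style resolution you propose to seek already exists --- Broaddus \cite[Proposition~3.3]{Br} resolves $\St(\Mod_g)$ by chord diagrams in close analogy with Definition~\ref{def:resolution} --- so the obstacle is not building the resolution but finding a nontrivial stabilization chain map on it. And you pass quickly over a genuine technical wrinkle the paper isolates: since there is no homomorphism $\Mod_g\to\Mod_{g+1}$, one must route through $\Mod_g^1$ and separately establish that $H_i(\Mod_g^1;\St(\Mod_g))\to H_i(\Mod_g;\St(\Mod_g))$ is an isomorphism in the relevant range, which the paper records as Conjecture~\ref{conjecture:forgetboundary}.
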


\para{Proving the restated conjecture}
There are two obstructions to constructing a homomorphism 
\[H_i(\Mod_g;\St(\Mod_g))\to H_i(\Mod_{g+1};\St(\Mod_{g+1}))\] 
that could realize the conjectured isomorphisms. The first technical issue is that  there is no natural map $\Mod_g\to \Mod_{g+1}$ (or vice versa). This issue already arises in proving ordinary homological stability for $\Mod_g$. The solution there is to consider surfaces with boundary, since there is a map $\Mod_g^1\hookrightarrow\Mod_{g+1}^1$ induced by embedding $S_g^1$ into $S_{g+1}^1$. There is also a natural surjection $\Mod_g^1\twoheadrightarrow \Mod_g$ induced by gluing a disc to the boundary component of $S_g^1$. Harer proved homological stability for $\Mod_g$ by showing that both the induced maps $H_i(\Mod_g^1)\to H_i(\Mod_g)$ and $H_i(\Mod_{g}^1)\to H_i(\Mod_{g+1}^1)$ are isomorphisms for $g\gg i$.

The same tactic could be applied to our conjecture. Applying \cite[Theorem 3.5]{BE} to \eqref{eq:birman} shows that $\Mod_g^1$ is a duality group with $\nu=4g-2$ and the same dualizing module $\St(\Mod_g)$, on which $\Mod_g^1$ acts via the projection $\Mod_g^1\twoheadrightarrow \Mod_g$. Thus a first step towards the reformulation of Conjecture~\ref{conjecture:mcg} would be to prove the following.
\begin{conjecture}
\label{conjecture:forgetboundary}
For each $i\geq 0$, the natural map $H_i(\Mod_g^1;\St(\Mod_g))\longrightarrow H_i(\Mod_{g};\St(\Mod_g))$ is an isomorphism for $g\gg i$.
\end{conjecture}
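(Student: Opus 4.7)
The plan is to attack Conjecture~\ref{conjecture:forgetboundary} via the Lyndon--Hochschild--Serre (LHS) spectral sequence for the Birman extension
\[
1\to \pi_1(T^1 S_g)\to \Mod_g^1\to \Mod_g\to 1,
\]
with coefficients in $\St(\Mod_g)$ pulled back along the surjection $\Mod_g^1\twoheadrightarrow \Mod_g$. Because this surjection induces the natural map in the conjecture, the map in question is precisely the edge map $H_i(\Mod_g^1;\St(\Mod_g))\to H_i(\Mod_g;\St(\Mod_g))$ of this spectral sequence. Our goal is therefore to show that the spectral sequence collapses onto its bottom row in total degree $i$, once $g\gg i$.

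First I would identify the $E^2$-page. Since the action of $\Mod_g^1$ on $\St(\Mod_g)$ factors through $\Mod_g$, the kernel $\pi_1(T^1S_g)$ acts trivially, yielding
\[
H_q(\pi_1(T^1S_g);\St(\Mod_g))\;\cong\; H_q(T^1S_g;\Q)\otimes \St(\Mod_g)
\]
as $\Mod_g$-modules under the diagonal action. Since $T^1S_g\to S_g$ is an oriented $S^1$-bundle whose Euler class $(2-2g)\in H^2(S_g;\Q)$ is nonzero for $g\geq 2$, the Gysin sequence identifies these $\Mod_g$-modules: $H_0$ and $H_3$ are the trivial module $\Q$, while $H_1$ and $H_2$ are both isomorphic to the standard symplectic representation $V=H_1(S_g;\Q)$. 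Thus the spectral sequence has exactly four nonzero rows, and the edge map onto the $q=0$ row is an isomorphism in total degree $i$ provided the contributions $E^\infty_{i-q,q}$ for $q=1,2,3$ all vanish.

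Second, I would reformulate these vanishing statements using Bieri--Eckmann duality for $\Mod_g$, which converts $H_p(\Mod_g;M\otimes\St(\Mod_g))$ into $H^{4g-5-p}(\Mod_g;M)$. The three conditions to verify become
\[
H^{4g-4-i}(\Mod_g;V)=0,\qquad H^{4g-3-i}(\Mod_g;V)=0,\qquad H^{4g-2-i}(\Mod_g;\Q)=0,
\]
all for $g\gg i$. The $q=3$ row is harmless for $i<3$ because the listed degree exceeds $\vcd(\Mod_g)=4g-5$, and for $i\geq 3$ it is a weaker (lower-$i$) case of the vanishing Conjecture~\ref{conjecture:mcgvanishing}, consistent with an inductive strategy in which Conjecture~\ref{conjecture:forgetboundary} at level $i$ is bootstrapped off vanishing at level $i-3$.

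The main obstacle is establishing the $q=1,2$ vanishing, i.e.\ the near-top cohomology of $\Mod_g$ with symplectic coefficients. The natural strategy is to mimic the proof that yields $H^{4g-5}(\Mod_g;\Q)=0$ in \cite{CFP,MSS}: build a resolution of $V\otimes \St(\Mod_g)$ analogous to Ash's complex $C_\bullet$, using configurations of simple closed curves in $S_g$ decorated by a homology class, and exhibit torsion symmetries (analogues of the order-$4$ element $\tau$ used for $\SL_n\Z$) that force the top two pieces of the coinvariant complex to be self-negating and hence zero on homology. Carrying this out will require the appropriate high-connectivity statement for a curve-complex-with-markings, and it is at this geometric input that the bulk of the work\,---\,and the source of the $g\gg i$ range\,---\,will be concentrated.
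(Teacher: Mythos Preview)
This statement is a \emph{conjecture} in the paper, and the paper does not supply a proof; it is posed as an open problem. The only case the paper actually establishes is $i=0$, and the argument there is purely formal: $H_0$ with module coefficients is the module of coinvariants, and since $\Mod_g^1\twoheadrightarrow\Mod_g$ is surjective the $\Mod_g^1$-coinvariants and $\Mod_g$-coinvariants of $\St(\Mod_g)$ coincide.

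Your spectral-sequence reduction is set up correctly: the identification of the four nonzero rows of the $E^2$-page via the Gysin sequence for $T^1S_g\to S_g$, and the duality rewriting of the $q=1,2,3$ contributions as near-top cohomology of $\Mod_g$ with coefficients $V$ or $\Q$, are both right. (For surjectivity of the edge map you also need the differentials \emph{out} of the $q=0$ row to vanish, which requires the same flavor of vanishing at slightly shifted indices; you did not spell this out.) But the proposal does not prove the conjecture---it reduces it to vanishing statements that are themselves open. The $q=3$ input is a shifted instance of Conjecture~\ref{conjecture:mcgvanishing}, which is unproved for $i\geq 1$, and the $q=1,2$ input, namely vanishing of $H^{4g-4-i}(\Mod_g;V)$ and $H^{4g-3-i}(\Mod_g;V)$, is a new conjecture for which no argument is available. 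Your final paragraph is a program rather than a proof: the resolution of $V\otimes\St(\Mod_g)$ by a marked curve complex, the required high-connectivity statement, and the torsion symmetry forcing the relevant coinvariants to vanish have not been constructed. So what you have written is a reasonable outline of what one would need, but it has the same status as the paper's own treatment: a conjecture with a suggested strategy, not a proof.
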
 Since the coefficient modules are the same in this case, this seems fairly tractable. For example, for formal reasons this coincidence of coefficient modules automatically implies  Conjecture~\ref{conjecture:forgetboundary} for $i=0$  (even without the vanishing result proved in \cite{CFP} that implies that both sides are zero).

Thus if we can construct a $\Mod_g^1$--equivariant map $\St(\Mod_g)\to \St(\Mod_{g+1})$ analogous to \eqref{eq:SLnsteinstab} for $\SL_n\Z$ above, we would obtain a homomorphism
\[H_i(\Mod_g^1;\St(\Mod_g))\to H_i(\Mod_{g+1}^1;\St(\Mod_{g+1}))\] 
which combined with Conjecture~\ref{conjecture:forgetboundary} would yield the desired stabilization map of \eqref{eq:mcgsteinstab}. The most natural approach to describing such a map would be to use Broaddus' resolution of $\St(\Mod_g)$ in terms of certain pictorial \emph{chord diagrams} \cite[Prop. 3.3]{Br}, which is closely analogous to Ash's resolution of $\St(\SL_n\Z)$ from Definition~\ref{def:resolution}. However, the natural first guess for the stabilization map for $\St(\Mod_g)$ turns out to be the zero map (see \cite[Proposition~4.5]{Br})\,---\,not just on homology as occurred for $\SL_n\Z$ in \eqref{eq:coinvariantsmap}, but actually the zero map $\St(\Mod_g)\to 0\to \St(\Mod_{g+1})$. A new idea is necessary, and so we pose the following open problem.

\begin{problem}
Construct a natural nonzero $\Mod_g^1$-equivariant map $\St(\Mod_g) \rightarrow \St(\Mod_{g+1})$\linebreak analogous to the stabilization map \eqref{eq:steinmap} for $\St(\SL_n\Z)$.
\end{problem}

\section{Stability in the unstable cohomology of \boldmath$\Aut(F_n)$}
\label{section:AutFn}
The analogy between $\Mod_g$ and $\SL_n\Z$ is well-known to extend to 
the automorphism group $\Aut(F_n)$ of the  free group $F_n$ of rank $n\geq 2$.  Hatcher--Vogtmann (and later with Wahl, see \cite{HW}) 
proved that $H^i(\Aut(F_n);\Z)$ is independent of $n$ for $n\gg i$.
Culler--Vogtmann \cite{CullerVogtmann}
proved that $\vcd(\Aut(F_n))=2n-2$. 

\begin{conjecture}
\label{conjecture:AutFn}
For each $i\geq 0$ the group $H^{2n-2-i}(\Aut(F_n);\Q)$ only depends on the parity of $n$ for $n\gg i$.
\end{conjecture}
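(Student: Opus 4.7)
The plan is to mirror the approach to Conjectures~\ref{conjecture:SLn} and \ref{conjecture:mcg}, adapted to account for the parity dependence. By a theorem of Bestvina--Feighn, $\Aut(F_n)$ is a virtual duality group with $\vcd(\Aut(F_n)) = 2n-2$ and some rational dualizing module $D_n$, so
\[H^{2n-2-i}(\Aut(F_n);\Q) \cong H_i(\Aut(F_n);D_n),\]
and Conjecture~\ref{conjecture:AutFn} is equivalent to the assertion that $H_i(\Aut(F_n); D_n)$ depends only on the parity of $n$ for $n \gg i$.

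The first step is to construct candidate stabilization maps in both directions. On the parabolic side, writing $F_{n+1} = F_n * \langle x\rangle$, the setwise stabilizer of $F_n$ in $\Aut(F_{n+1})$ is built from $\Aut(F_n)$, the involution $x \mapsto x^{-1}$, and conjugation of $x$ by elements of $F_n$; iterated Gysin maps associated to this chain of extensions should yield a candidate
\[H^{2(n+1)-2-i}(\Aut(F_{n+1});\Q) \to H^{2n-2-i}(\Aut(F_n);\Q)\]
analogous to \eqref{eq:SLnstabmap}. Dually, following the Steinberg template, one would construct an $\Aut(F_n)$-equivariant map $D_n \to D_{n+1}$ analogous to \eqref{eq:steinmap}, using for instance Hatcher's sphere-complex model or the Hatcher--Vogtmann boundary of (reduced) Auter space.

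The key phenomenon forcing parity (rather than outright) stability should be a sign twist. The determinant character $\epsilon_n\colon \Aut(F_n) \to \{\pm 1\}$ arising from the action on $H_1(F_n;\Z) \cong \Z^n$ should act nontrivially on $D_n$. When two Steinberg-type stabilizations are composed, the involution $\tau \in \Aut(F_{n+2})$ swapping two new basis elements should yield a chain-level relation $\tau \circ (\psi \circ \psi) = -(\psi \circ \psi)$ in the spirit of \eqref{eq:coinvariantsmap}; but unlike in $\SL_{n+2}\Z$, where the analogous sign is killed by the determinant-1 condition and forces vanishing on coinvariants, here the twist by $\epsilon$ should compensate, so that the two-step map $n \to n+2$ produces a nonzero isomorphism on homology while the single-step map $n \to n+1$ does so only after an $\epsilon$-twist. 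This explains why stability in parity, rather than outright stability, is the correct formulation.

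The main obstacle, paralleling the open problem stated for $\Mod_g$, is constructing an explicit nonzero $\Aut(F_n)$-equivariant map $D_n \to D_{n+2}$ (or an $\epsilon$-twisted version of $D_n \to D_{n+1}$) that realizes the conjectured isomorphisms. As in the $\Mod_g$ setting, the most natural pictorial candidates are likely to yield the zero map, so a new geometric idea on the boundary of Auter space will be required. Once such a map is in hand, verifying it is an isomorphism in the conjectured range should be accessible via the same spectral-sequence and connectivity arguments that established ordinary homological stability for $\Aut(F_n)$.
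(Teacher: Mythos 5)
This statement is a conjecture, not a theorem: the paper offers no proof, only a duality-theoretic framework, computational evidence, and a heuristic for the parity dependence, and explicitly flags the construction of a stabilization map $\St(\Aut(F_n))\to\St(\Aut(F_{n+1}))$ as an open problem. Your proposal is likewise a roadmap rather than a proof (which you acknowledge), so there is no complete argument on either side to compare line by line; but the heuristics diverge in a way worth spelling out.

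Where you agree with the paper: casting the conjecture as $H_i(\Aut(F_n);D_n)$ via Bieri--Eckmann duality, with $\Aut(F_n)$ a virtual duality group of $\vcd = 2n-2$ by Bestvina--Feighn together with \cite[Theorem 3.5]{BE}, and looking for Steinberg-type stabilization maps modeled on the bordification of Outer/Auter space. The paper poses exactly these as Problems 10 and 11 and notes that the dualizing module has not even been explicitly described, so this part of your plan is in the right spirit.

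Where you diverge, and where I think there is a genuine gap: you attribute the parity phenomenon to a conjectural sign twist by the determinant character $\epsilon_n\colon\Aut(F_n)\to\{\pm1\}$ acting on $D_n$, compensating the sign of the transposition $\tau$ so that $n\to n+2$ survives while $n\to n+1$ does not. This mechanism does not appear in the paper and is pure speculation; you give no evidence that $\epsilon_n$ acts nontrivially on $D_n$, and the analogous signs in the $\Mod_g$ case are shown in the paper to be governed not by a determinant twist at all but by the vanishing of $H^1(\SL_2\Z;\Q)$, so the naive analogy you are drawing is on shaky ground. More importantly, you omit entirely what the paper actually presents as the reason to expect parity rather than outright stability or vanishing: the existence of two parity-indexed families of unstable graph-homology classes. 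For even $n$, the Morita cycles give candidate nonzero classes in $H_{\nu-1}(\Out(F_n);\Q)$ and $H_{\nu-2}(\Aut(F_n);\Q)$; for odd $n$, the Conant--Kassabov--Vogtmann ``hairy graph'' classes give candidates in $H_{\nu-1}(\Aut(F_n);\Q)$. These families are conjectured nonzero for all $n$ of the relevant parity, and the paper even notes that the Morita cycles are known to die after one ordinary stabilization (Conant--Vogtmann), which is precisely the observation motivating a parity-preserving rather than $n\to n+1$ stabilization. Any serious attempt at this conjecture must engage with these classes; a proposal that does not even mention them has missed the main content the paper puts behind Conjecture~\ref{conjecture:AutFn}.
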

This conjecture is perhaps more speculative than Conjectures~\ref{conjecture:SLn} and \ref{conjecture:mcg}, and it remains an open question even for $i=0$. However, known conjectures on sources of unstable cohomology are consistent with Conjecture~\ref{conjecture:AutFn} for $i=1$ and $i=2$, as we explain below. 
The closely related group $\Out(F_n)$ has virtual cohomological dimension $2n-3$, and we similarly conjecture that $H^{2n-3-i}(\Out(F_n);\Q)$ only depends on the parity of $n$ for $n\gg i$.

\para{Computational evidence}
The rational cohomology groups of $\Aut(F_n)$ have been computed for $2\leq n\leq 5$, and the rational cohomology groups of $\Out(F_n)$ have been computed for $2\leq n\leq 6$. These
calculations are summarized in Table~\ref{table:cohomologyAutFn}. 
\begin{table}
\begin{centering}
\begin{tabular}{l@{\hspace{26pt}}l@{\hspace{26pt}}p{\QAut}@{\hspace{2pt}}p{\QAut}@{\hspace{2pt}}p{\QAut}@{\hspace{2pt}}p{\QAut}@{\hspace{2pt}}p{\QAut}@{\hspace{2pt}}p{\QAut}@{\hspace{2pt}}p{\QAut}@{\hspace{2pt}}p{\QAut}@{\hspace{30pt}}p{\QAut}@{\hspace{26pt}}p{\QAut}
@{\hspace{2pt}}p{\QAut}@{\hspace{2pt}}p{\QAut}@{\hspace{2pt}}p{\QAut}@{\hspace{2pt}}p{\QAut}@{\hspace{2pt}}p{\QAut}@{\hspace{2pt}}p{\QAut}
@{\hspace{2pt}}p{\QAut}@{\hspace{2pt}}p{\QAut}@{\hspace{2pt}}p{\QAut}@{\hspace{2pt}}p{\QAut}@{\hspace{2pt}}p{\QAut}@{\hspace{2pt}}p{\QAut}
@{}}
\toprule
$n$ & \!\!\!$\vcd$ & \multicolumn{8}{l}{\!\!\!\!$H^i(\Aut(F_n);\Q)$}& $\vcd$&\multicolumn{9}{l}{\!\!\!\!$H^i(\Out(F_n);\Q)$}      \\
\midrule
$2$ & $2$  & $0$ &  $0$  & & & & & & & \ 1& 0\\
$3$ & $4$  & $0$ & $0$ & $0$  &  $0$  & & & & & \ 3 & 0 & 0 & 0\\
$4$ & $6$  & $0$ & $0$ & $0$ & $\Q$ & $0$    & $0$ & & & \ 5 & 0 & 0 & 0 & $\Q$ & 0\\
$5$ & $8$  & $0$ & $0$ & $0$ & $0$ & $0$    & $0$ & $\Q$ & 0 & \ 7 & $0$ & $0$ & $0$ & $0$ & $0$    & $0$ & 0 \\
$6$ &  & & & & & & &  & & \ 9 & $0$ & $0$ & $0$ & $0$ & $0$    & $0$ & 0 & $\Q$ & 0\\
\bottomrule
\end{tabular}
\caption{The rational cohomology of $\Aut(F_n)$ for $2 \leq n \leq 5$ and of $\Out(F_n)$ for $2\leq n\leq 6$.  These were computed for $1\leq i\leq 6$ in both cases by Hatcher--Vogtmann \cite{HV}; $H^7(\Aut(F_5);\Q)$ and $H^8(\Aut(F_5);\Q)$ were computed by Gerlits (see \cite{CKV}); and $H^7(\Out(F_5);\Q)$ and $H^*(\Out(F_6);\Q)$ were computed by Ohashi \cite{Ohashi}. All rational cohomology classes are unstable.}
\label{table:cohomologyAutFn}
\end{centering}
\end{table}
\pagebreak

\para{Unstable classes and graph homology}
When $n$ is even, Morita \cite[\S6.5]{Morita} constructed cycles in $H_{2n-4}(\Out(F_n);\Q)=H_{\nu-1}(\Out(F_n);\Q)$, and Conant--Vogtmann \cite{CV} showed that these cycles can be lifted to $H_{2n-4}(\Aut(F_n);\Q)=H_{\nu-2}(\Aut(F_n);\Q)$. These classes are known to be nonzero in $H_4(\Out(F_4);\Q)$ and $H_4(\Aut(F_4);\Q)$ \cite{Morita}, in $H_8(\Out(F_6);\Q)$ and $H_8(\Aut(F_6);\Q)$ \cite{CV}, and in $H_{12}(\Out(F_8);\Q)$ and $H_{12}(\Aut(F_8);\Q)$ \cite{Gray}.  They are conjectured to be nonzero for all even $n$.

Galatius \cite{Galatius} proved that for $n\gg i$ we have $H^i(\Aut(F_n);\Q)=0$ and $H^i(\Out(F_n);\Q)=0$, so \emph{all} the rational cohomology of $\Aut(F_n)$ and $\Out(F_n)$ is unstable. The Morita cycles are known to be immediately unstable: Conant--Vogtmann \cite{CoVstab} proved that the Morita cycles vanish after stabilizing once from $H_{2n-4}(\Aut(F_n);\Q)$ to $H_{2n-4}(\Aut(F_{n+1});\Q)$. However, Conjecture~\ref{conjecture:AutFn} provides a sense in which these classes might be stable after all.

Similarly, when $n$ is odd, Conant--Kassabov--Vogtmann \cite{CKV} have recently constructed classes in $H_{2n-3}(\Aut(F_n);\Q)=H_{\nu-1}(\Aut(F_n);\Q)$, which are known to be nonzero in $H_7(\Aut(F_5);\Q)$ and $H_{11}(\Aut(F_7);\Q)$ and conjectured to be nonzero for all odd $n$. All known nonzero rational homology classes for $\Aut(F_n)$ and $\Out(F_n)$ fit into one of these families. Finally, the Morita cycles were generalized by Morita and by Conant--Vogtmann \cite[\S6.1]{CV} to produce, for every graph of rank $r$ with $k$ vertices all of odd valence, a cycle in $H_{\nu-(k-1)}(\Out(F_{r+k});\Q)$. The Morita cycles in $H_{\nu-1}(\Out(F_n);\Q)$ correspond to the graph with 2 vertices connected by $n-1$ parallel edges. Can all odd-valence graphs be naturally grouped into  families which contribute to $H_{\nu-i}(\Out(F_n);\Q)$ for some fixed $i$?

\para{Stabilization and duality}
Bestvina--Feighn \cite{BF} proved that $\Out(F_n)$ is a virtual duality group, so by \cite[Theorem 3.5]{BE} $\Aut(F_n)$ is a virtual duality group as well. The rational dualizing module $\St(\Aut(F_n))$ can be understood in terms of the topology at infinity of Culler--Vogtmann's \emph{Outer space} (see \cite[\S5]{BF} for details), but it has not been described explicitly. 

\begin{problem}
Construct a resolution for $\St(\Aut(F_n))$ analogous to Ash's resolution of $\St(\SL_n\Z)$ from Definition~\ref{def:resolution}, and analogous to Broaddus's resolution of $\St(\Mod_g))$ in terms of chord diagrams.
\end{problem}\pagebreak

\noindent We do not know an analogue for $\Aut(F_n)$ of the stabilization map
\eqref{eq:SLnsteinstab} that we constructed for $\SL_n\Z$.
\begin{problem}
Define a nontrivial, natural $\Aut(F_n)$-equivariant map $\St(\Aut(F_n))\to \St(\Aut(F_{n+1}))$.
\end{problem}
Conant--Vogtmann used Bestvina--Feighn's bordification of Outer space to construct a complex of \emph{filtered graphs} that computes the homology of $\Aut(F_n)$ \cite[\S7.3]{CVkont}.  This should yield a resolution of $\St(\Aut(F_n))$. However, from this perspective it is not clear to us how to define a stabilization map $\St(\Aut(F_n))\to \St(\Aut(F_{n+1}))$.


\para{Abelian cycles}
Consider the subgroup $K<\Aut(F_n)$ generated by $x_i\mapsto x_ix_1$ and by $x_i\mapsto x_1x_i$ for $1<i\leq n$. This subgroup is isomorphic to $\Z^{2n-2}$ and thus provides an explicit witness for the lower bound $\vcd(\Aut(F_n))\geq 2n-2$.

\begin{question}
Under the inclusion $i\colon \Z^{2n-2}\approx K\hookrightarrow \Aut(F_n)$ of the subgroup $K$, is the image of the fundamental class nonzero for some $n\geq 5$? That is, is it ever true that \[i_*[\Z^{2n-2}]\neq 0\in H_{2n-2}(\Aut(F_n);\Q)\text{?}\]
\end{question}

\para{Acknowledgements} We thank Avner Ash, Jim Conant, Matthew Emerton, Shigeyuki Morita, Akshay Venkatesh, and Karen Vogtmann for helpful conversations and correspondence. We are especially grateful to Dave Witte Morris for the description of the cocompact lattices mentioned in~\S\ref{section:SLn}.

\small

\noindent
Dept.\ of Mathematics\\
Stanford University\\
450 Serra Mall\\
Stanford, CA 94305\\
E-mail: church@math.stanford.edu
\medskip

\noindent
Dept.\ of Mathematics\\
University of Chicago\\
5734 University Ave.\\
Chicago, IL 60637\\
E-mail: farb@math.uchicago.edu
\medskip

\noindent
Dept.\ of Mathematics\\
Rice University\\
6100 Main St.\\
Houston, TX 77005\\
E-mail: andyp@rice.edu


\begin{thebibliography}{ABCDEF}
\small


\bibitem[A]{Ash}
A. Ash, Unstable cohomology of $SL(n,\mathcal{O})$, \emph{J. Algebra} 167 (1994), 330--342.


\bibitem[BE]{BE} 
R. Bieri and B. Eckmann, Groups with homological duality generalizing Poincar\'{e} duality, 
{\em Invent. Math.}  20 (1973), 103--124. 

\bibitem[BF]{BF} M. Bestvina and M. Feighn, The topology at infinity of $\Out(F_n)$, \emph{Invent. Math.} 140 (2000), 651--692. Available at: \url{http://www.math.utah.edu/~bestvina/eprints/duality.ps}

\bibitem[Bo]{Borel} 
A. Borel, Stable real cohomology of arithmetic groups, {\em Ann. Sci. \'{E}cole Norm. Sup.} (4) 7 (1974), 235--272 (1975).

\bibitem[Bo2]{BorelSurvey}
A. Borel, 
Introduction to the cohomology of arithmetic groups, 
in {\it Lie groups and automorphic forms}, 51--86, 
AMS/IP Stud. Adv. Math., 37 Amer. Math. Soc., Providence, RI. 

\bibitem[BS]{BS} A. Borel and J.-P. Serre, Corners and arithmetic groups, \emph{Comment. Math. Helv.} 48 (1973), 436--491.

\bibitem[Br]{Br} 
N. Broaddus, Homology of the curve complex and the Steinberg module of the mapping class group, {\em Duke Math. J.} 161 (2012) 10, 1943--1969. arXiv:0711.0011.

\bibitem[Bro]{Bro} 
K. Brown, {\em Cohomology of groups}, Springer GTM Series, Vol. 87, 1982.

\bibitem[By]{By}
V.A. Bykovskii, Generating elements of the annihilating ideal for modular symbols, \emph{Funct. Anal. Appl.} 37 (2003) 4, 263--272.

\bibitem[CFP]{CFP}
T. Church, B. Farb and A. Putman, The rational cohomology of the mapping class group vanishes in its cohomological dimension, {\em Inter. Math. Res. Notices} (2012) 21, 5025--5030. arXiv:1108.0622.

\bibitem[CFP2]{CFP2}
T. Church, B. Farb and A. Putman, Vanishing and nonvanishing in the high-dimensional cohomology of $\SL_n(\mathcal{O}_K)$, in preparation.

\bibitem[CKV]{CKV} J. Conant, M. Kassabov, and K. Vogtmann, Hairy graphs and the unstable homology of $\Mod(g,s)$, $\Out(F_n)$ and $\Aut(F_n)$, \emph{J. Topology} (2013) 6 (1), 119--153. arXiv:1107.4839.

\bibitem[CoV]{CV} J. Conant and K. Vogtmann, Morita classes in the homology of automorphism groups of free groups, \emph{Geom. Topol.} 8 (2004), 1471--1499. arXiv:math/0406389.

\bibitem[CoV2]{CVkont} J. Conant and K. Vogtmann, On a theorem of Kontsevich, \emph{Algebr. Geom. Topol.}  3  (2003), 1167--1224. arXiv:math/0208169.

\bibitem[CoV3]{CoVstab} J. Conant and K. Vogtmann,  Morita classes in the homology of $\Aut(F_n)$ vanish after one
 stabilization, \emph{Groups Geom. Dyn. } 2  (2008) 1, 121--138. arXiv:math/0606510.

\bibitem[CuV]{CullerVogtmann}
M. Culler\ and\ K. Vogtmann, 
Moduli of graphs and automorphisms of free groups, 
\emph{Invent. Math.} 84 (1986) 1, 91--119. 

\bibitem[D]{DwyerTwisted}
W. G. Dwyer, 
Twisted homological stability for general linear groups, 
\emph{Annals of Math.} 111 (1980) 2, 239--251. 

\bibitem[EVGS]{ElbazVincentGanglSoule}
P. Elbaz-Vincent, H. Gangl, and C. Soul\'e,
Perfect forms and the cohomology of modular groups,
preprint (2010). Most recent version (v3, September 2011) available at:\\
\url{http://hal.archives-ouvertes.fr/hal-00443899/fr/}

\bibitem[FM]{FM} 
B. Farb and D. Margalit, \emph{A primer on mapping class groups}, Princeton Mathematical Series, Vol. 49, Princeton Univ. Press, 2012. 

\bibitem[Fr]{FrankeBorelConjecture}
J. Franke, 
Harmonic analysis in weighted $L\sb 2$-spaces, 
\emph{Ann. Sci. \'Ecole Norm. Sup.} (4) 31 (1998) 2, 181--279.

\bibitem[G]{Galatius} S. Galatius,  Stable homology of automorphism groups of free groups, \emph{Annals of Math.} 173 (2011) 2, 705--768. arXiv:math/0610216.

\bibitem[Gr]{Gray} J. Gray, On the homology of automorphism groups of free groups, Ph.D. thesis, University of Tennessee, 2011. Available at: \url{http://trace.tennessee.edu/utk_graddiss/974/}

\bibitem[Ha1]{Ha1}
J. Harer, Stability of the homology of the mapping class groups of orientable surfaces, {\em Annals of Math.} 121 (1985) 2, 215--249. 

\bibitem[Ha2]{Ha2}
J. Harer,  The virtual cohomological dimension of the mapping class group of an orientable surface, 
{\em Invent. Math.} 84 (1986) 1, 157--176.

\bibitem[HV]{HV} A. Hatcher and K. Vogtmann, Rational homology of $\Aut(F_n)$, \emph{Math. Res. Lett.} 5 (1998), 759--780. Available at: \url{http://www.math.cornell.edu/~vogtmann/papers/Rational/}

\bibitem[HW]{HW} 
A. Hatcher and N. Wahl, Erratum to: ``Stabilization for the automorphisms of free groups with boundaries'', {\em Geom. Topol.} 12 (2008) 2, 639--641. arXiv:math/0608333.

\bibitem[I]{Igusa}
J. Igusa, Arithmetic variety of moduli for genus two, \emph{Ann. of Math.} 72 (1960), 612--649.

%

\bibitem[K]{Kontsevich}
M. Kontsevich, Formal (non)commutative symplectic geometry, 173--187, in \emph{The Gel'fand Mathematical Seminars, 1990--1992}, Birkh\"{a}user, Boston, 1993.

\bibitem[J]{JohnsonFirst}
D. Johnson, 
The structure of the Torelli group. I. A finite set of generators for $\mathcal{I}$, 
\emph{Ann. of Math.} 118 (1983) 3, 423--442.

\bibitem[LS]{LS}
R. Lee and R.H. Szczarba, On the homology and cohomology of congruence subgroups, {\em Invent. Math.} 33 (1976)  1, 15--53.

\bibitem[LS2]{LeeSzczarbaTorsion}
R. Lee and R.H. Szczarba, On the torsion in $K_4(\Z)$ and $K_5(\Z)$, \emph{Duke Math. J.} 45 (1978), 101--129.

\bibitem[Lo]{Looijenga}
E. Looijenga, 
Cohomology of ${\mathbf M}\sb 3$ and ${\mathbf M}\sp 1\sb 3$, 
in {\it Mapping class groups and moduli spaces of Riemann 
surfaces (G\"ottingen, 1991/Seattle, WA, 1991)}, 205--228, 
Contemp. Math., 150 Amer. Math. Soc., Providence, RI. Available at: \url{http://www.staff.science.uu.nl/~looij101/mthreenew.ps}

\bibitem[Ma]{Mat} Y. Matsushima, On Betti numbers of compact, locally symmetric Riemannian manifolds, \emph{Osaka Math. J.} 14 (1962), 1--20.

\bibitem[Me]{Mess} 
G. Mess, Unit tangent bundle subgroups of mapping class groups, IHES preprint, 1990.

\bibitem[Mo]{Morita} S. Morita, Structure of the mapping class groups of surfaces: a survey and a prospect, \emph{Proceedings of the Kirbyfest (Berkeley, CA, 1998)}, 349--406, Geom. Topol. Monogr. 2, Geom Topol Publ., Coventry, 1999. arXiv:math.GT/9911258.

\bibitem[MSS1]{MSS}
S. Morita, T. Sakasai, and M. Suzuki,
Abelianizations of derivation Lie algebras of free associative algebra and free Lie algebra,
\emph{Duke Math. J.} 162 (2013) 5, 965--1002. arXiv:1107.3686.

\bibitem[MSS2]{MSS2}
S. Morita, T. Sakasai, and M. Suzuki,
Computations in formal symplectic geometry and characteristic classes of moduli spaces,
to appear in \emph{Quantum Topology}. arXiv:1207.4350.

\bibitem[O]{Ohashi} R. Ohashi, The rational homology group of $\Out(F_n)$ for $n\leq 6$, \emph{Experiment. Math.}  17  (2008) 2, 167--179.


\bibitem[PR]{PR}
V. Platonov and A. Rapinchuk, \emph{Algebraic Groups and Number Theory}, 
Pure and Applied Mathematics 139, Academic Press, Inc., Boston, MA,  1994.


\bibitem[Sch]{SchwermerSurvey} J. Schwermer, Geometric cycles, arithmetic groups and their cohomology, \emph{Bull. Amer. Math. Soc.} 47 (2010) 2, 187--279.

\bibitem[Sch2]{SchwermerHolomorphy}
J. Schwermer, 
Holomorphy of Eisenstein series at special points 
and cohomology of arithmetic subgroups of $\SL_n(\Q)$, 
\emph{J. Reine Angew. Math.} 364 (1986), 193--220.


\bibitem[So]{Soule}
C. Soul\'e, The cohomology of $\SL_3(\Z)$, \emph{Topology} 17 (1978) 1, 1--22.

\bibitem[St]{SteinBook}
W. Stein, 
\emph{Modular forms, a computational approach}, 
Graduate Studies in Mathematics, 79, 
Amer. Math. Soc., Providence, RI, 2007. 


\bibitem[T]{Tommasi}
O. Tommasi, 
Rational cohomology of the moduli space of genus 4 curves, 
\emph{Compos. Math.} 141 (2005) 2, 359--384. arXiv:math/0312055.

\bibitem[vdK]{vdK}
W. van der Kallen, Homology stability for linear groups, {\em Invent. Math.} 60 (1980) 3, 269--295. 


\end{thebibliography}
\end{document}